\documentclass{amsart}
\usepackage{amsmath}
\usepackage{amssymb}
\usepackage{amsthm}
\usepackage[top=3cm,left=3cm,right=3cm,bottom=3cm]{geometry} 
\usepackage[bitstream-charter]{mathdesign}
\usepackage[T1]{fontenc}
\usepackage[mathscr]{eucal}
\usepackage[shortlabels]{enumitem}
\usepackage[all]{xy}
\usepackage{xcolor}
\usepackage{hyperref}
\numberwithin{equation}{section}
\setcounter{section}{-1}
\setcounter{tocdepth}{1}

\theoremstyle{plain}
\newtheorem{theorem}{Theorem}[section]

\newtheorem{lemma}[theorem]{Lemma}
\newtheorem{corollary}[theorem]{Corollary}
\theoremstyle{definition}
\newtheorem{definition}[theorem]{Definition}

\newtheorem{notation}[theorem]{Notation}
\newtheorem{remark}[theorem]{Remark}
\newtheorem{example}[theorem]{Example}

\theoremstyle{plain}
\newtheorem{thm}{Theorem}

\newcommand{\mcal}[1]{\mathcal{#1}}
\newcommand{\mbb}[1]{\mathbb{#1}}
\newcommand{\mbf}[1]{\mathbf{#1}}
\newcommand{\mrm}[1]{\mathrm{#1}}

\newcommand{\mscr}[1]{\mathscr{#1}}

\newcommand{\Fun}{\operatorname{Fun}}
\newcommand{\Map}{\operatorname{Map}}
\newcommand{\cofib}{\operatorname{cofib}}
\newcommand{\cosk}{\operatorname{cosk}}
\newcommand{\colim}{\operatorname*{colim}}

\newcommand{\Corr}{\mathrm{Corr}}

\newcommand{\Gr}{\mathrm{Gr}}
\newcommand{\MGL}{\mathrm{MGL}}
\newcommand{\Mon}{\mathrm{Mon}}

\newcommand{\Perf}{\mathrm{Perf}}
\newcommand{\PMGL}{\mathrm{PMGL}}
\newcommand{\Proj}{\mathrm{Proj}}
\newcommand{\PSh}{\mathrm{PSh}}
\newcommand{\QCoh}{\mathrm{QCoh}}
\newcommand{\Sch}{\mathrm{Sch}}
\newcommand{\SH}{\mathrm{SH}}
\newcommand{\Shv}{\mathrm{Shv}}
\newcommand{\Sp}{\mathrm{Sp}}
\newcommand{\Spec}{\mathrm{Spec}}
\newcommand{\Sym}{\mathrm{Sym}}

\newcommand{\co}{\mathrm{co}}
\newcommand{\ex}{\mathrm{ex}}
\newcommand{\fqsm}{\mathrm{fqsm}}
\newcommand{\id}{\mathrm{id}}
\newcommand{\lax}{\mathrm{lax}}
\newcommand{\mot}{\mathrm{mot}}
\newcommand{\op}{\mathrm{op}}
\newcommand{\pbf}{\mathrm{pbf}}
\newcommand{\tr}{\mathrm{tr}}
\newcommand{\univ}{\mathrm{univ}}
\newcommand{\Zar}{\mathrm{Zar}}
\newcommand{\Cat}{\mathscr{C}\mathrm{at}}
\newcommand{\FQSm}{\mathscr{FQS}\mathrm{m}}
\newcommand{\Pic}{\mathscr{P}\mathrm{ic}}
\newcommand{\Vect}{\mathscr{V}\mathrm{ect}}

\title{Cohomology of the moduli stack of algebraic vector bundles}

\author{Toni Annala}
\address{School of Mathematics, Institute for Advanced Study, 1 Einstein Drive, 08540 Princeton, NJ, USA.}
\email{\href{mailto:tannala@ias.edu}{tannala@ias.edu}}
\thanks{The first author was support by the Vilho, Yrj\"o and Kalle V\"ais\"al\"a Foundation of the Finnish Academy of Science and Letters.}

\author{Ryomei Iwasa}
\address{Laboratoire de Math\'ematiques d'Orsay, Universit\'e Paris-Saclay, 307 rue Michel Magat, F-91405 Orsay.}
\email{\href{mailto:ryomei.iwasa@cnrs.fr}{ryomei.iwasa@cnrs.fr}}
\thanks{The second author was supported by the European Union's Horizon 2020 research and innovation programme under the Marie Sk\l{}odowska-Curie grant agreement No.\ 896517}

\begin{document}

\date{\today}

\begin{abstract}
Let $\Vect_n$ be the moduli stack of vector bundles of rank $n$ on derived schemes.
We prove that, if $E$ is a Zariski sheaf of ring spectra which is equipped with finite quasi-smooth transfers and satisfies projective bundle formula, then $E^*(\Vect_{n,S})$ is freely generated by Chern classes $c_1,\dotsc,c_n$ over $E^*(S)$ for any qcqs derived scheme $S$.
Examples include all multiplicative localizing invariants.
\end{abstract}

\maketitle

\tableofcontents

\section{Introduction}

In algebraic topology, the cohomology of the classifying spaces of unitary groups is fundamental:
for a complex oriented cohomology theory $E$ and $n\ge 0$, there is a canonical ring isomorphism 
\[
	E^*(BU(n)) \simeq \pi_*E[[c_1,\dotsc,c_n]],
\]
where $c_1,\dotsc,c_n$ are the universal Chern classes.
The goal of this paper is to establish its algebraic counterpart.
Examples of algebraic cohomology theories our results apply to are localizing invariants in the sense of \cite{BGT} such as algebraic $K$-theory and topological Hochschild homology, as well as non $\mbb{A}^1$-localized algebraic cobordism, which we define.
In order to work in this generality, we develop a version of motivic homotopy theory and show that all localizing invariants are representable there.
This would be the key computational step toward further study of algebraic $K$-theory and algebraic cobordism beyond $\mbb{A}^1$-homotopy invariance.\footnote{In the sequel \cite{AI}, the results and techniques obtained in this paper are applied to prove a universality of $K$-theory. Generalizations of the results in this paper are also established in op.\ cit.}

Let us start by discussing what the algebraic counterpart of complex oriented cohomology theories should be.
To detect orientations, we take the viewpoint of transfers; see \cite[\S1]{Qui} for the relation between complex orientations, cobordism, and transfers in algebraic topology.
On the algebraic side, it is shown in \cite{EHKSYb} that the algebraic cobordism $\MGL$ is universal among $\mbb{A}^1$-local motivic spectra with finite quasi-smooth transfers.
Taking this into account, we consider \textit{Zariski sheaves with finite quasi-smooth transfers} on derived schemes, which we call \textit{sheaves with transfers} for short, cf.\ Definition \ref{def:pst} and \ref{def:sheaf}.
In practice, we restrict to sheaves on the $\infty$-category $\Sch_S$ of derived schemes of finite presentation over a qcqs derived scheme $S$.
We remark that derived schemes are essential to formulate sheaves with finite quasi-smooth transfers.

Instead of the $\mbb{A}^1$-homotopy invariance as in Morel-Voevodsky's theory \cite{MV}, our basic input is the projective bundle formula.
We say that a sheaf $E$ with transfers on $\Sch_S$ \textit{satisfies projective bundle formula} or \textit{is pbf-local} if the map 
\[
	\iota_*\oplus p^* \colon E(\mbb{P}^{n-1}_X) \oplus E(X) \to E(\mbb{P}^n_X)
\]
is an equivalence for every $X\in\Sch_S$ and $n\ge 0$, where $\iota_*$ is the pushforward along a fixed linear embedding $\mbb{P}^{n-1}\to\mbb{P}^n$, which comes from the transfers of $E$.
All localizing invariants satisfy projective bundle formula, while they do not satisfy $\mbb{A}^1$-homotopy invariance in general.

To state our main result, we introduce some notations.
Let $\Shv^\tr(\Sch_S)$ be the $\infty$-category of sheaves with transfers on $\Sch_S$ (Definition \ref{def:sheaf}), $\Shv^\tr_\pbf(\Sch_S)$ its full subcategory spanned by pbf-local sheaves with transfers, and $\SH^\tr_\pbf(\Sch_S)$ the $\infty$-category of spectrum objects in $\Shv^\tr_\pbf(\Sch_S)$.
Then $\SH^\tr_\pbf(\Sch_S)$ has a canonical symmetric monoidal structure and an algebra object there is a sheaf of ring spectra which is equipped with transfers and satisfies projective bundle formula.
For example, a multiplicative localizing invariant yields an $\mbb{E}_\infty$-algebra in $\SH^\tr_\pbf(\Sch_S)$ (Corollary \ref{cor:mult}).
Let $\Vect_n$ be the moduli stack of vector bundles of rank $n$.
If $E$ is a presheaf of spectra and $X$ is an algebraic stack, then we write $E^*(X):=\pi_{-*}\Map(\Sigma^\infty_+X,E)$.

\begin{thm}[Corollary \ref{cor:vect}]\label{thm0:coh}
Let $S$ be a qcqs derived scheme and $n\ge 0$.
Let $E$ be a homotopy commutative algebra in $\SH^\tr_\pbf(\Sch_S)$.
Then there is a canonical ring isomorphism
\[
	E^*(\Vect_{n,S}) \simeq E^*(S)[[c_1,\dotsc,c_n]].
\]
\end{thm}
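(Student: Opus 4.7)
The plan is to approximate $\Vect_{n,S}$ by the finite Grassmannians $\Gr(n,N)_S$, compute the cohomology of each Grassmannian by iterating the projective bundle formula, and pass to the limit in $N$. As a preparatory step, iterated pbf presents $E^*(\mbb{P}^m_X)$ as a free $E^*(X)$-module on $1,\xi,\dotsc,\xi^m$ for a canonical class $\xi=c_1(\mcal{O}(1))\in E^*(\mbb{P}^m_X)$ and every $X\in\Sch_S$. Applying this to the projective bundle $\mbb{P}(V)\to X$ of a rank $n$ vector bundle $V$ defines higher Chern classes $c_1(V),\dotsc,c_n(V)\in E^*(X)$ via the unique relation $\xi^n+c_1(V)\xi^{n-1}+\dotsb+c_n(V)=0$ in $E^*(\mbb{P}(V))$. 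Pulling back the Chern classes of the universal bundle yields $c_i\in E^*(\Vect_{n,S})$ and hence a candidate ring map $E^*(S)[[c_1,\dotsc,c_n]]\to E^*(\Vect_{n,S})$.

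Next I would compute $E^*(\Gr(n,N)_S)$. Let $p\colon\mathrm{Fl}\to\Gr(n,N)_S$ be the complete flag bundle of the tautological rank-$n$ quotient $\mcal{Q}$; it is a tower of $n-1$ projective bundles, so by iterated pbf the map $p^*$ makes $E^*(\mathrm{Fl})$ a free $E^*(\Gr(n,N)_S)$-module of rank $n!$, with splitting classes $x_1,\dotsc,x_n\in E^*(\mathrm{Fl})$ (the first Chern classes of the line-bundle factors of $\mcal{Q}$) whose elementary symmetric functions are the pullbacks of $c_i(\mcal{Q})$. On the other hand $\mathrm{Fl}$ is itself a tower of partial flag bundles over $S$ inside the trivial bundle of rank $N$, so iterated pbf gives a presentation $E^*(\mathrm{Fl})\simeq E^*(S)[x_1,\dotsc,x_n]/I_N$. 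Taking $S_n$-invariants then yields $E^*(\Gr(n,N)_S)\simeq E^*(S)[c_1,\dotsc,c_n]/J_N$, where $J_N$ encodes the Whitney-sum relation that $\mcal{Q}$ is a quotient of $\mcal{O}^N$ and whose generators lie in degrees tending to infinity with $N$.

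For the final step I would combine a presentation of $\Vect_{n,S}$ as a colimit of Grassmannians (which should follow from earlier structural results in the paper) with Zariski descent for $E$ to obtain $E^*(\Vect_{n,S})\simeq\lim_N E^*(\Gr(n,N)_S)$. Since the relation ideals $J_N$ live in strictly increasing degrees, the inverse system is degreewise eventually constant, $\lim^1$ vanishes, and the limit is precisely $E^*(S)[[c_1,\dotsc,c_n]]$. The main obstacle is this last reduction: in contrast to the $\mbb{A}^1$-local setting, one cannot deduce $\Vect_{n,S}\simeq\colim_N\Gr(n,N)_S$ in the sheaf category from contractibility of an approximation of $EGL_n$, so the equality of $E$-cohomologies must instead be extracted from the transfer structure on $E$ and the Zariski-local description of $\Vect_n$, and checking that $\lim^1$ genuinely vanishes in each degree requires careful bookkeeping of the Chern-class filtration.
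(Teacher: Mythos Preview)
Your overall architecture---define Chern classes via pbf, compute the finite Grassmannians, then pass to the limit---matches the paper's, and your Grassmannian computation via the flag tower is a reasonable variant of the paper's induction on $n$ (Lemma~\ref{lem:grass}). The point where your argument fails is exactly the one you flag yourself: you have no mechanism for the identification $E^*(\Vect_{n,S})\simeq\lim_N E^*(\Gr(n,N)_S)$.

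This is not a preliminary ``structural result'' one can cite; it \emph{is} the main theorem of the paper (Theorem~\ref{thm:vect}), and the corollary you are proving is deduced from it in one line. Without $\mbb{A}^1$-invariance there is no reason for $\colim_N\Gr(n,N)$ to model $\Vect_n$ as a Zariski sheaf, and Zariski descent plus the Zariski-local description of $\Vect_n$ alone do not give you $E^*(\Vect_n)\simeq\lim_N E^*(\Gr(n,N))$: locally a bundle is trivial, but the transition data live in $GL_n$, and you have no control over $E^*(BGL_n)$ until you have already proved the theorem. The paper's proof of Theorem~\ref{thm:vect} has two independent halves. A \emph{left} inverse of $L_\mot\gamma^*\Gr_n\to L_\mot\gamma^*\Vect_n$ is built from the Chern classes of the universal bundle together with the Grassmannian computation (so your work in the middle paragraph does feed in here). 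A \emph{right} inverse requires a genuinely new idea: one solves the lifting problem against $k$-truncations by tensoring with $\mcal{O}(k+1)$ on the compactified simplices $\bar\Delta^\bullet$, which kills the relevant $H^1$ obstruction and lets a closed-gluing argument go through (Lemma~\ref{lem:key}, Corollary~\ref{cor:key}). Neither the transfer structure in the abstract nor a $\lim^1$ bookkeeping can substitute for this step; until it is done you do not know that the map on $E$-cohomology is even surjective.
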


The isomorphism is well-known for oriented $\mbb{A}^1$-local motivic ring spectra, cf.\ \cite[Proposition 6.2]{NSO}.
For algebraic $K$-theory, the isomorphism was known when the base is a regular scheme as a special case of the $\mbb{A}^1$-local motivic result.
Topological Hochschild homology is not $\mbb{A}^1$-homotopy invariant, but it would be possible to prove the isomorphism directly by using the comparison with the de Rham-Witt complex; see \cite{Tot} for a related computation.
Our theorem extends these to results on general localizing invariants over general bases and gives a unified proof.

Theorem \ref{thm0:coh} follows from a comparison of the ``motivic'' homotopy type of $\Vect_n$ and that of the infinite grassmannian $\Gr_n$.
The cohomology of the latter is rather simple to calculate and we get Theorem \ref{thm0:coh}.
The comparison is stated as follows.
Let $\gamma^*$ be the left adjoint of the forgetful functor $\Shv^\tr(\Sch_S)\to\Shv(\Sch_S)$ and $L_\mot$ the localization functor enforcing projective bundle formula on sheaves with transfers.
Then:

\begin{thm}[Theorem \ref{thm:vect}]\label{thm0:vect}
For every qcqs derived scheme $S$ and $n\ge 0$, the canonical map
\[
	L_\mot\gamma^*\Gr_{n,S} \to L_\mot\gamma^*\Vect_{n,S}
\]
is an equivalence.
\end{thm}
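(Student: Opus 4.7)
The plan is to present both sides as $GL_{n,S}$-quotient stacks and reduce the motivic comparison to an equivariant statement. Write $\Vect_{n,S} = BGL_{n,S} = [S/GL_{n,S}]$ and $\Gr_{n,S} = \colim_N\Gr_n(\mcal{O}^N_S) = [V^\infty_S/GL_{n,S}]$, where $V^N_S\subset\mrm{Mat}_{N\times n,S}$ is the open subscheme of matrices of full rank $n$ (with its free right $GL_n$-action) and $V^\infty_S = \colim_N V^N_S$; the comparison map is induced by the $GL_{n,S}$-equivariant structure map $V^\infty_S\to S$. Since $L_\mot\gamma^*$ is a left adjoint, it preserves the geometric realizations presenting these quotient stacks, so the claim reduces to the corresponding $GL_{n,S}$-equivariant motivic comparison.

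Unlike in the $\mbb{A}^1$-local setting, one cannot conclude that $V^\infty_S$ is motivically contractible: affine spaces need not be motivically trivial under only the projective bundle formula (witness topological Hochschild homology). The equivalence must therefore be established after descending to the $GL_n$-quotients, where the affine-bundle directions get absorbed. I would proceed by induction on $n$. The base case $n=1$ is $L_\mot\gamma^*\mbb{P}^\infty\simeq L_\mot\gamma^*B\mbb{G}_m$: here the classifying map $\mbb{P}^\infty\to B\mbb{G}_m$ of the tautological line bundle should be shown to be a motivic equivalence by using the tower of projective bundle formulas $E^*(\mbb{P}^N) = E^*(S)[c_1]/(c_1^{N+1})$ to compute $E^*(\mbb{P}^\infty) = E^*(S)[[c_1]]$ and matching this with a bar-construction computation of $E^*(B\mbb{G}_m)$ supplied by the transfer structure on $E$.

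For the inductive step ($n\ge 2$), the tool is the splitting principle. The full flag bundles $F(\mcal{V})\to\Vect_n$ and $F(\mcal{U})\to\Gr_n$ of the respective universal rank-$n$ bundles are iterated projective bundles, hence motivically controlled by iterated PBF. Both admit natural maps to $(B\mbb{G}_m)^n = (\Vect_1)^n$ recording the associated graded of the universal flag, and combining iterated PBF with the $n=1$ base case applied to each successive line-bundle quotient identifies $F(\mcal{V})$ and $F(\mcal{U})$ with the same motivic iterated $\mbb{P}^\infty$-tower over $(B\mbb{G}_m)^n$. Descending along the finite étale $S_n$-quotient, which is accommodated by the finite quasi-smooth transfers available on $E$, yields $L_\mot\gamma^*\Gr_n\simeq L_\mot\gamma^*\Vect_n$.

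The main obstacle I expect is the base case $n=1$: extracting $L_\mot\gamma^*\mbb{P}^\infty\simeq L_\mot\gamma^*B\mbb{G}_m$ from the projective bundle formula and transfers alone, without the crutch of $\mbb{A}^1$-invariance, requires taking the colimit of the PBF computations coherently and matching it with a bar-construction presentation of $B\mbb{G}_m$. Once the base case is in hand, the splitting-principle-based induction should proceed essentially formally from the structure of iterated projective bundles and finite étale transfers in $\SH^\tr_\pbf(\Sch_S)$.
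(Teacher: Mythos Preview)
Your inductive step has a genuine gap. First, the map from the full flag stack to the base is an iterated projective bundle, not a finite \'etale $S_n$-cover; there is no $S_n$-action on full flags with quotient $\Vect_n$. (One \emph{can} descend along iterated projective bundles via PBF, so this misidentification is not fatal by itself.) The real problem is the claimed identification of $F(\mcal{V})$ and $F(\mcal{U})$ with a common object over $(B\mbb{G}_m)^n$. The map $F(\mcal{V})\to(B\mbb{G}_m)^n$ recording the associated graded is $BB_n\to BT_n$, whose fiber is $BU_n$ for the unipotent radical $U_n$; this is an iterated $B\mbb{G}_a$, an affine-bundle direction that is not motivically trivial under PBF alone. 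You correctly diagnosed this obstruction for $V^\infty_S\to S$ at the outset, and it reappears here unchanged: passing to flags merely trades the $GL_n$-equivariant problem for the $B_n$-equivariant one without making it easier. Your base case is also not established: extracting $E^*(B\mbb{G}_m)$ from the bar resolution $\lvert\mbb{G}_m^{\times\bullet}\rvert$ would require control over $E^*(\mbb{G}_m)$ that PBF by itself does not provide.

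The paper's argument is not inductive in $n$ and does not pass through flags. It produces separately a left inverse and a right inverse of the canonical map. The left inverse comes from the computation of $E^*(\Gr_n)$ as a power series ring in Chern classes: one defines Chern classes of the universal bundle on $\Vect_n$ via PBF on $\mbb{P}(\mcal{E}_\univ)$ and uses them to lift the identity of $L_\mot\gamma^*\Gr_n$ through $\Vect_n$. The right inverse, which is the substantive part, uses a closed-gluing argument on the projective simplices $\bar{\Delta}^\bullet$: one shows that the lifting problem from $\Vect_n$ to $\lvert\cosk_k\Gr_n(\bar{\Delta}^\bullet)\rvert$ over $\lvert\cosk_k\Vect_n(\bar{\Delta}^\bullet)\rvert$ can be solved Zariski-locally \emph{after twisting the target bundle by $\mcal{O}(k+1)$}. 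The twist is what replaces $\mbb{A}^1$-contractibility: it forces $H^1(\bar{\Delta}^m,\mcal{O}(k-m))=0$ and makes $\mcal{O}(k-m)$ globally generated for $m\le k$, so surjections onto $\mcal{O}(k+1)^{\oplus n}$ extend from $\partial\bar{\Delta}^m$ to $\bar{\Delta}^m$. One then checks separately that the twist becomes invisible after applying $L_\mot$. This mechanism is uniform in $n$ and is invisible from the splitting-principle viewpoint.
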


We believe that our methods would give new insights into the theory of motives beyond $\mbb{A}^1$-homotopy invariance.
Over the past few years, there have been several attempts to study non $\mbb{A}^1$-local motivic phenomena as in \cite{Bin,BPO,KMSY}.
However, all of them have technical limitations; for example, one cannot expect that general localizing invariants are representable there (since their categories are $\mbb{Z}$-linear), while they are representable in our category.
In addition to localizing invariants, we are pursing more general theories such as algebraic cobordism.
The initial algebra object in $\SH^\tr_\pbf(\Sch_S)$ should be regarded as the "periodic algebraic cobordism" $\PMGL$, which has not been constructed without $\mbb{A}^1$-localization.
In fact, we have a modification so that the initial algebra object represents the non-periodic algebraic cobordism $\MGL$.
Then Voevodsky's cobordism should be recovered as the $\mbb{A}^1$-localization $L_{\mbb{A}^1}\MGL$ and Annala's cobordism in \cite{Ann} should be recovered as $\pi_0\MGL$.
We hope to discuss these comparisons elsewhere.

We conclude this introduction with a brief outline of this paper.
In Section \ref{pbf}, we build setups for sheaves with transfers which satisfy projective bundle formula.
In Section \ref{euler}, we prove some basic properties of pbf-local sheaves with transfers in terms of Euler classes.
In Section \ref{vect}, we prove a key technical lemma for the proof of Theorem \ref{thm0:vect} and prove the comparison for $n=1$.
In Section \ref{chern}, we develop a theory of Chern classes and use it for computing the cohomology of grassmannians.
Then we complete the proof of Theorem \ref{thm0:vect} and get Theorem \ref{thm0:coh} as its corollary.
In Section \ref{loc}, we prove that every localizing invariant is representable in $\SH^\tr_\pbf(\Sch_S)$ so that a multiplicative localizing invariant yields an $\mbb{E}_\infty$-algebra in $\SH^\tr_\pbf(\Sch_S)$.

\subsubsection*{Convention}

We use the language of $\infty$-categories as set out in \cite{HTT,HA}.
We refer to \cite{SAG} for the theory of derived schemes.
We assume that all derived schemes are qcqs (quasi-compact and quasi-separated).
We say that a morphism $X\to Y$ of derived schemes is \textit{quasi-smooth} if it is of finite presentation and the cotangent complex $L_{X/Y}$ is of Tor-amplitude $\ge 1$, cf.\ \cite[\S2]{KR}.
A \textit{vector bundle} on a derived scheme $X$ is a locally free quasi-coherent module of finite rank on $X$.
Our treatment of sheaves with transfers are strongly influenced by the theory of framed correspondences as developed in \cite{EHKSYa}, from which we adopt some notations.

\section{Sheaves with transfers and projective bundle formula}\label{pbf}

Fix a qcqs derived scheme $S$.
Let $\Sch_S$ be the $\infty$-category of derived schemes of finite presentation over $S$ and $\Corr(\Sch_S)$ the $\infty$-category of correspondences/spans in $\Sch_S$, cf.\ \cite[Appendix C]{BH}.
By the assumption of finite presentation, the $\infty$-categories $\Sch_S$ and $\Corr(\Sch_S)$ are essentially small.
Recall that an object of $\Corr(\Sch_S)$ is an object of $\Sch_S$ and a morphism from $X$ to $Y$ in $\Corr(\Sch_S)$ is a diagram in $\Sch_S$
\[
\xymatrix{
	& Z \ar[ld] \ar[rd] & \\
	X & & Y.
}
\]
Let $\Corr^\fqsm(\Sch_S)$ be the subcategory of $\Corr(\Sch_S)$ spanned by morphisms whose left span is finite and quasi-smooth.

\begin{definition}\label{def:pst}
A \textit{presheaf with transfers} on $\Sch_S$ is a presheaf of spaces on $\Corr^\fqsm(\Sch_S)$ which preserves finite products.
We write $\PSh^\tr_\Sigma(\Sch_S)$ for the $\infty$-category $\PSh_\Sigma(\Corr^\fqsm(\Sch_S))$ of finite-product preserving presheaves.
\end{definition}

\begin{remark}
Since the $\infty$-category $\Corr^\fqsm(\Sch_S)$ is semiadditive by \cite[Lemma C.3]{BH}, presheaves with transfers take values in $\mbb{E}_\infty$-spaces by \cite[Proposition 2.3]{GGN}.
\end{remark}

\begin{notation}
Let $f\colon Y\to X$ be a morphism in $\Sch_S$ and $E$ a presheaf with transfers on $\Sch_S$.
We let ${}_!f=f\colon Y\to X$ denote the morphism in $\Corr^\fqsm(\Sch_S)$ and let $f^*:=E({}_!f)\colon E(X)\to E(Y)$.
If $f$ is finite quasi-smooth, then we let ${}^!f\colon X\to Y$ denote the dual morphism in $\Corr^\fqsm(\Sch_S)$ and let $f_*:=E({}^!f)\colon E(Y)\to E(X)$.
One can think of $f_*$ as a \textit{Gysin morphism}.
\end{notation}

\begin{example}
We write $\FQSm_S$ for the presheaf with transfers on $\Sch_S$ represented by $S$, which is by definition the \textit{moduli stack of finite quasi-smooth derived schemes}.
\end{example}

The canonical functor $\gamma\colon\Sch_S\to\Corr^\fqsm(\Sch_S)$ preserves finite coproducts and $\Corr^\fqsm(\Sch_S)$ is canonically endowed with a symmetric monoidal structure for which $\gamma$ is symmetric monoidal.
It follows that $\gamma$ induces an adjunction
\[
	\gamma^* \colon \PSh_\Sigma(\Sch_S) \rightleftarrows \PSh^\tr_\Sigma(\Sch_S) \colon \gamma_*
\]
and $\gamma^*$ is symmetric monoidal.
Here, we endow $\PSh_\Sigma(\Sch_S)$ and $\PSh^\tr_\Sigma(\Sch_S)$ with the unique symmetric monoidal structures for which the Yoneda embeddings are symmetric monoidal and the tensor products are compatible with small colimits, cf.\ \cite[Proposition 4.8.1.10]{HA}.
We note that the projection formula is automatic for presheaves with transfers.

\begin{lemma}[Projection formula]\label{lem:pf}
Let $E$ be a presheaf with transfers on $\Sch_S$ and $f\colon Y\to X$ a finite quasi-smooth morphism in $\Sch_S$.
Then the following diagrams commute
\[
\xymatrix@C-0.3pc{
	E(Y)\otimes E(X) \ar[r]^-{\id\otimes f^*} \ar[d]^{f_*\otimes\id} &
	E(Y)\otimes E(Y) \ar[r] &
	(E\otimes E)(Y) \ar[d]^{f_*} \\
	E(X)\otimes E(X) \ar[rr] & & (E\otimes E)(X)
}
\qquad
\xymatrix@C-0.3pc{
	E(X)\otimes E(Y) \ar[r]^-{f^*\otimes\id} \ar[d]^{\id\otimes f_*} &
	E(Y)\otimes E(Y) \ar[r] &
	(E\otimes E)(Y) \ar[d]^{f_*} \\
	E(X)\otimes E(X) \ar[rr] & & (E\otimes E)(X),
}
\]
where $E\otimes E$ denotes the tensor product in $\PSh^\tr_\Sigma(\Sch_S)$.
\end{lemma}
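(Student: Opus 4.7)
The projection formula should follow formally from the Day convolution structure on $\PSh^\tr_\Sigma(\Sch_S)$ once each composite in the diagram is identified with $E\otimes E$ applied to an explicit morphism in $\Corr^\fqsm(\Sch_S)$. The plan is to set up this dictionary and then verify that the two resulting morphisms in $\Corr^\fqsm(\Sch_S)$ agree by a span computation.

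First I would unpack the lax monoidal structure map $E(Z)\otimes E(Z)\to(E\otimes E)(Z)$ appearing in the diagram. Since the Yoneda embedding $h\colon\Corr^\fqsm(\Sch_S)\to\PSh^\tr_\Sigma(\Sch_S)$ is symmetric monoidal, an element $\alpha\in E(Z)=\Map(h(Z),E)$ together with $\beta\in E(W)$ produces an external product $\alpha\boxtimes\beta\in(E\otimes E)(Z\times W)$, namely the tensor of the maps $\alpha\colon h(Z)\to E$ and $\beta\colon h(W)\to E$. Specializing to $W=Z$, the canonical map of the lemma is the external product followed by pullback along the diagonal ${}_!\Delta_Z\colon Z\to Z\times Z$ in $\Corr^\fqsm(\Sch_S)$.

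With this description, both composites $E(Y)\otimes E(X)\to(E\otimes E)(X)$ in the first diagram become the external product $E(Y)\otimes E(X)\to(E\otimes E)(Y\times X)$ followed by $E\otimes E$ applied to a morphism $X\to Y\times X$ in $\Corr^\fqsm(\Sch_S)$. Tracing definitions, the top-right route yields
\[
	X\xrightarrow{{}^!f}Y\xrightarrow{{}_!(\id_Y,f)}Y\times X,
\]
while the bottom-left route yields
\[
	X\xrightarrow{{}_!\Delta_X}X\times X\xrightarrow{{}^!f\times\id_X}Y\times X.
\]
A direct fiber product computation then shows that both composites are equal, namely they coincide with the span $X\xleftarrow{f}Y\xrightarrow{(\id_Y,f)}Y\times X$: in the first case, the pullback over $Y$ is $Y\times_Y Y=Y$; in the second, the pullback over $X\times X$ is $X\times_{X\times X}(Y\times X)\cong Y$ via $y\mapsto(f(y),(y,f(y)))$.

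The second diagram is handled by the symmetric argument after swapping the two tensor factors. I expect the main obstacle to be the first step: the lax monoidal structure and the interaction between the transfers ${}^!f$, the pullbacks ${}_!f$, and the diagonals ${}_!\Delta$ must be tracked carefully, even though the underlying identity in $\Corr^\fqsm(\Sch_S)$ reduces to the elementary pullback calculation above.
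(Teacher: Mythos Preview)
Your proposal is correct and is essentially what the paper's proof amounts to: the paper reduces to representable $E$ via colimits and then declares the check ``straightforward,'' and your explicit span identity ${}_!(\id_Y,f)\circ{}^!f=({}^!f\times\id_X)\circ{}_!\Delta_X$ in $\Corr^\fqsm(\Sch_S)$ is exactly that check. Your use of the external product and the lax monoidal structure is just a repackaging of the reduction to representables (an element of $E(Y)$ is a map $h(Y)\to E$), so the two arguments coincide once unwound.
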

\begin{proof}
Since the tensor products commute with colimits, we may assume that $E$ is representable by a derived scheme in $\Sch_S$.
Then the commutativity is straightforward to check.
\end{proof}

\begin{definition}\label{def:sheaf}
A \textit{sheaf with transfers} on $\Sch_S$ is a presheaf $E$ with transfers such that $\gamma_*E$ is a sheaf on the Zariski site $\Sch_{S,\Zar}$.
We write $\Shv^\tr(\Sch_S)$ for the full subcategory of $\PSh^\tr_\Sigma(\Sch_S)$ spanned by sheaves with transfers.
\end{definition}

For each non-negative integer $n\ge 0$, let
\[
	\bar{\Delta}^n := \Proj\biggl(\frac{\mbb{Z}[U,T_0,\dotsc,T_n]}{U-\sum_{i=0}^n T_i}\biggr)
	\qquad
	\bar{\Delta}^n_\infty :=
	\begin{cases}
		V_+(U) & \text{if } n\ge 1 \\
		\varnothing & \text{if }n=0.
	\end{cases}
\]
Let $p\colon\bar{\Delta}^n\to\Spec(\mbb{Z})$ denote the projection and $\iota\colon\bar{\Delta}^n_\infty\to\bar{\Delta}^n$ denote the inclusion of the subscheme.
Since $\iota$ is quasi-smooth, it yields a morphism ${}^!\iota\colon\bar{\Delta}^n\to\bar{\Delta}^n_\infty$ in $\Corr^\fqsm(\Sch)$.

\begin{definition}\label{def:pbf}
We say that a (pre)sheaf $E$ with transfers on $\Sch_S$ \textit{satisfies projective bundle formula} or \textit{is pbf-local} if, for every $X\in\Sch_S$ and $n\ge 0$, the map
\[
	p^*\oplus\iota_* \colon E(X)\oplus E(X\times\bar{\Delta}^n_\infty) \to E(X\times\bar{\Delta}^n)
\]
is an equivalence.
We write $\Shv^\tr_\pbf(\Sch_S)$ for the full subcategory of $\PSh^\tr_\Sigma(\Sch_S)$ spanned by pbf-local sheaves with transfers.
\end{definition}

\begin{remark}
If $E$ is a pbf-local presheaf with transfers, then for any $X$-points $a,b\colon X\rightrightarrows\bar{\Delta}^n_X$ not meeting $\bar{\Delta}^n_{\infty,X}$ the two maps $a^*,b^*\colon E(\bar{\Delta}^n_X)\rightrightarrows E(X)$ are homotopic to each other.
\end{remark}

\begin{remark}
Every pbf-local sheaf with transfers has its values in grouplike $\mbb{E}_\infty$-spaces.
We give a proof in the next section, cf.\ Corollary \ref{cor:fund}.
\end{remark}

The $\infty$-category $\Shv^\tr_\pbf(\Sch_S)$ is an accessible localization of $\PSh^\tr_\Sigma(\Sch_S)$ by \cite[Proposition 5.5.4.15]{HTT}.
We denote the localization functor by
\[
	L_\mot\colon\PSh^\tr_\Sigma(\Sch_S) \to \Shv^\tr_\pbf(\Sch_S).
\]
Then $\Shv^\tr_\pbf(\Sch_S)$ admits a unique symmetric monoidal structure for which the localization functor $L_\mot$ is symmetric monoidal by \cite[Proposition 4.1.7.4]{HA}.

\begin{definition}\label{def:sh}
A \textit{pbf-local sheaf of spectra with transfers} on $\Sch_S$ is a spectrum object in the $\infty$-category $\Shv^\tr_\pbf(\Sch_S)$ in the sense of \cite[\S1.4.2]{HA}.
We write $\SH^\tr_\pbf(\Sch_S)$ for the $\infty$-category of spectrum objects in $\Shv^\tr_\pbf(\Sch_S)$.
\end{definition}

\begin{remark}
A pbf-local sheaf $E$ of spectra with transfers can be identified with a presheaf of spectra on $\Corr^\fqsm(\Sch_S)$ such that $\Omega^{\infty-n}E$ is a pbf-local sheaf with transfers for every $n\ge 0$.
\end{remark}

There is an adjunction
\[
	B^\infty_\mot \colon \Shv^\tr_\pbf(\Sch_S) \rightleftarrows \SH^\tr_\pbf(\Sch_S) \colon \Omega^\infty
\]
and the left adjoint $B^\infty_\mot$ is called the \textit{infinite bar construction}.\footnote{In fact, this is an adjoint equivalence, as proved in \cite[Theorem 2.4.5]{AI}, but we will not use it in this paper.}
Then $\SH^\tr_\pbf(\Sch_S)$ admits a unique symmetric monoidal structure for which the infinite bar construction is symmetric monoidal by \cite[Theorem 5.1]{GGN}.

\begin{example}
Every localizing invariant in the sense of \cite{BGT} naturally defines a pbf-local sheaf of spectra with transfers on $\Sch_S$ for each qcqs derived scheme $S$.
Moreover, if a localizing invariant is multiplicative, then it defines an $\mbb{E}_\infty$-algebra in $\SH^\tr_\pbf(\Sch_S)$.
We give a proof in Section \ref{loc}.
\end{example}

\subsubsection*{Base changes}

Let $f\colon T\to S$ be a morphism of derived schemes.
Then the base change functor $\Sch_S\to \Sch_T$ induces adjunctions
\begin{align*}
	f^*\colon \PSh^\tr_\Sigma(\Sch_S) &\rightleftarrows \PSh^\tr_\Sigma(\Sch_T) \colon f_* \\
	f^*\colon \Shv^\tr_\pbf(\Sch_S) &\rightleftarrows \Shv^\tr_\pbf(\Sch_T) \colon f_* \\
	f^*\colon \SH^\tr_\pbf(\Sch_S) &\rightleftarrows \SH^\tr_\pbf(\Sch_T) \colon f_*.
\end{align*}
It is straightforward to see that the left adjoints $f^*$ are symmetric monoidal and compatible with the localization functor $L_\mot$ and the infinite bar construction $B^\infty_\mot$.

\subsubsection*{Approximation of the motivic localization}

The assignments $n\mapsto\bar{\Delta}^n$ and $n\mapsto\bar{\Delta}^n_\infty$ form semi-cosimplicial objects in the category of schemes in a standard way, and the map $\iota\colon\bar{\Delta}^n_\infty\to\bar{\Delta}^n$ is assembled into a morphism of semi-cosimplicial schemes $\iota\colon\bar{\Delta}^\bullet_\infty\to\bar{\Delta}^\bullet$.
Since the square
\[
\xymatrix{
	\bar{\Delta}^m_\infty \ar[r]^\iota & \bar{\Delta}^m \\
	\bar{\Delta}^n_\infty \ar[r]^\iota \ar[u]^\theta & \bar{\Delta}^n \ar[u]^\theta
}
\]
is cartesian in $\Sch$ for each injection $\theta\colon[n]\hookrightarrow[m]$, the map $\iota$ yields a morphism of semi-cosimplicial objects ${}^!\iota\colon \bar{\Delta}^\bullet\to\bar{\Delta}^\bullet_\infty$ in $\Corr^\fqsm(\Sch)$.

\begin{lemma}\label{lem:approx}
Let $E$ be a presheaf with transfers.
Then the canonical map $E\to L_\mot E$ factors as
\[
	E
	\to \lvert E(\bar{\Delta}^\bullet)\rvert
	\to \lvert \cofib(E(\bar{\Delta}^\bullet_\infty) \xrightarrow{\iota_*} E(\bar{\Delta}^\bullet)) \rvert
	\to L_\mot E,
\]
where $\lvert(-)\rvert$ denotes the geometric realization.
\end{lemma}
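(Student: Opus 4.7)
The plan is to construct the three arrows of the claimed factorization and verify, by naturality, that their composite agrees with the unit $\eta_E\colon E\to L_\mot E$.

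The first arrow is the canonical inclusion of the $0$th level into the realization; since $\bar\Delta^0=\Spec(\mbb{Z})$, we have $E(-\times\bar\Delta^0)\simeq E$. The second arrow is the realization of the natural levelwise projection to the cofiber
\[
E(\bar\Delta^\bullet)\to\cofib\bigl(E(\bar\Delta^\bullet_\infty)\xrightarrow{\iota_*}E(\bar\Delta^\bullet)\bigr);
\]
the semi-simplicial presheaves and the semi-simplicial map $\iota_*$ both arise from the discussion preceding the lemma, using the semi-cosimplicial structures in $\Corr^\fqsm(\Sch)$.

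For the third arrow, applying the unit $E\to L_\mot E$ yields a morphism of semi-simplicial cofibers, and the claim is that the target realizes to $L_\mot E$. Indeed, pbf-locality of $L_\mot E$ says that $p^*\oplus\iota_*$ is an equivalence for each $n\ge 0$, so $p^*\colon L_\mot E\to L_\mot E(-\times\bar\Delta^n)$ descends to a natural equivalence $L_\mot E\xrightarrow{\simeq}\cofib(L_\mot E(-\times\bar\Delta^n_\infty)\to L_\mot E(-\times\bar\Delta^n))$. Because $p_n=p_m\circ\theta$ for every face map $\theta\colon\bar\Delta^n\hookrightarrow\bar\Delta^m$, these level-wise equivalences assemble into an equivalence of semi-simplicial presheaves $\mrm{const}_{L_\mot E}\xrightarrow{\simeq}\cofib(L_\mot E(\bar\Delta^\bullet_\infty)\to L_\mot E(\bar\Delta^\bullet))$. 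To conclude $\lvert\mrm{const}_{L_\mot E}\rvert\simeq L_\mot E$ after realization, one uses that $\Delta_{\mrm{inj}}^\op$ is weakly contractible; the endofunctor $F\colon[n]\mapsto[n+1]$ adjoining a new maximum admits natural transformations from both $\mrm{id}$ (include as the first $n+1$ elements) and from the constant functor $c_{[0]}$ (pick out the new maximum), whence the identity of $B\Delta_{\mrm{inj}}$ is null-homotopic.

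Concatenating the three arrows, naturality lets us rewrite the total composite as $E\xrightarrow{\eta_E}L_\mot E=L_\mot E(-\times\bar\Delta^0)\to\lvert L_\mot E(\bar\Delta^\bullet)\rvert\to\lvert\cofib(\cdots)\rvert\xleftarrow{\simeq}L_\mot E$, and the last three arrows compose to $\mrm{id}_{L_\mot E}$ by construction, so the composite is $\eta_E$ as required. The main technical ingredient is the realization identification in the third arrow, which reduces to the contractibility of $B\Delta_{\mrm{inj}}$.
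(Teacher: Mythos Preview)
Your proof is correct and follows essentially the same approach as the paper's one-line argument, which simply asserts that for pbf-local $E$ there is a canonical equivalence $E(X)\simeq\lvert\cofib(E(\bar\Delta^\bullet_\infty\times X)\xrightarrow{\iota_*}E(\bar\Delta^\bullet\times X))\rvert$ and declares the factorization immediate. You have unpacked exactly what the paper leaves implicit: that pbf-locality forces the semi-simplicial cofiber to be constant via $p^*$, that the realization of a constant semi-simplicial object recovers the object because $\lvert N\Delta_{\mrm{inj}}\rvert$ is contractible, and that naturality of the unit then identifies the composite with $\eta_E$.
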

\begin{proof}
Note that if $E$ is pbf-local then there is a canonical equivalence
\[
	E(X) \simeq \lvert \cofib(E(\bar{\Delta}^\bullet_\infty\times X) \xrightarrow{\iota_*} E(\bar{\Delta}^\bullet\times X)) \rvert,
\]
from which the desired factorization is immediate.
\end{proof}

\section{Euler classes}\label{euler}

\begin{definition}\label{def:pmgl}
We define $\PMGL_S:=L_\mot\FQSm_S$, which is a unit object in the symmetric monoidal $\infty$-category $\Shv^\tr_\pbf(\Sch_S)$.
\end{definition}

\begin{remark}
$\PMGL_S$ is a version of the periodic algebraic cobordism.
Comparison with the existing theories will be discussed elsewhere.
See \cite[Corollary 3.4.2]{EHKSYb} for the similar description of the $\mbb{A}^1$-local algebraic cobordism.
\end{remark}

\begin{definition}\label{def:euler}
Let $\mcal{E}$ be a vector bundle on a derived scheme $X\in\Sch_S$.
We define the \textit{Euler class} $e(\mcal{E})\in\pi_0\PMGL_S(X)$ by $e(\mcal{E}):=s^*s_*(\mbf{1}_X)$, where $s$ denotes the zero section of the total space of $\mcal{E}$
\[
	s\colon X \to \mbb{V}(\mcal{E}):=\Spec(\Sym(\mcal{E}^\vee)).
\]
\end{definition}

\begin{remark}
The Euler classes are compatible with base changes, i.e., for a morphism $f\colon Y\to X$ in $\Sch_S$, we have $f^*e(\mcal{E})=e(f^*\mcal{E})$.
Since every pbf-local sheaf $E$ with transfers is a module over $\PMGL_S$, the Euler class of a vector bundle $\mcal{E}$ on $X$ yields an endomorphism of $E(X)$ up to homotopies.
\end{remark}

Let $a$ be a global section of a vector bundle $\mcal{E}$ on $X$.
Then the \textit{derived vanishing locus} $V_a$ of $a$ is a derived scheme defined by the cartesian square
\[
\xymatrix{
	V_a \ar[r]^-{j_a} \ar[d] & X \ar[d]^s \\
	X \ar[r]^-a & \mbb{V}(\mcal{E}).
}
\]
By definition, we have $e(\mcal{E})=j_{0*}(\mbf{1}_{V_0})$.

\begin{lemma}\label{lem:euler}
Let $\mcal{E}$ be a vector bundle on a derived scheme $X\in\Sch_S$.
Then, for any global section $a$ of $\mcal{E}$, we have $j_{a*}(\mbf{1}_{V_a})=e(\mcal{E})$.
In particular, if $\mcal{E}$ admits a nowhere vanishing global section, then $e(\mcal{E})=0$.
\end{lemma}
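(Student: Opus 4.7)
The plan is to interpolate between $a$ and the zero section by a family of sections of (a twist of) $\mcal{E}$ over $\bar{\Delta}^1\times X$, and then to invoke pbf-locality of $\PMGL_S$. The main obstacle is that $\bar{\Delta}^1\simeq\mbb{P}^1$ admits no non-constant global functions, so no global section of the pullback $p^*\mcal{E}$ on $\bar{\Delta}^1\times X$ can specialize differently at two distinct $X$-sections. I get around this by twisting with $\mcal{O}(1)$.

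Concretely, I would set $\tilde{\mcal{E}}:=p^*\mcal{E}\otimes q^*\mcal{O}(1)$ on $\bar{\Delta}^1\times X$, with $p,q$ the two projections, and consider the global section $\tilde{a}:=T_1\cdot a$ of $\tilde{\mcal{E}}$. At $v_0:=[1:0]$, trivializing $\mcal{O}(1)$ by $T_0$ identifies $\tilde{\mcal{E}}|_{v_0\times X}$ with $\mcal{E}$ and carries $\tilde{a}|_{v_0\times X}$ to the zero section of $\mcal{E}$; at $v_1:=[0:1]$, trivializing instead by $T_1$ carries $\tilde{a}|_{v_1\times X}$ to $a$. Both $v_0$ and $v_1$ differ from $\bar{\Delta}^1_\infty=\{[1:-1]\}$, so the constant sections $\sigma_0,\sigma_1\colon X\to\bar{\Delta}^1\times X$ determined by them do not meet $\bar{\Delta}^1_\infty\times X$. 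Since the derived vanishing locus is defined by a Cartesian square, base change along $\sigma_i$ identifies $V_{\tilde{a}}\times_{\bar{\Delta}^1\times X}X$ with $V_0$ for $i=0$ and with $V_a$ for $i=1$.

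I would then invoke the base-change identity built into $\Corr^\fqsm(\Sch_S)$ to obtain $\sigma_0^*j_{\tilde{a}*}(\mbf{1}_{V_{\tilde{a}}})=j_{0*}(\mbf{1}_{V_0})=e(\mcal{E})$ and $\sigma_1^*j_{\tilde{a}*}(\mbf{1}_{V_{\tilde{a}}})=j_{a*}(\mbf{1}_{V_a})$ in $\pi_0\PMGL_S(X)$. Since $\PMGL_S$ is pbf-local by Definition \ref{def:pmgl} and both $\sigma_i$ avoid $\bar{\Delta}^1_\infty\times X$, the remark after Definition \ref{def:pbf} forces $\sigma_0^*=\sigma_1^*$ on $\PMGL_S(\bar{\Delta}^1\times X)$, yielding $e(\mcal{E})=j_{a*}(\mbf{1}_{V_a})$.

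The ``in particular'' clause is then essentially free: if $a$ is nowhere vanishing, the Koszul complex on $a$ is acyclic, so $V_a=\varnothing$ as a derived scheme, and $\mbf{1}_{V_a}$ is the zero element of $\PMGL_S(\varnothing)$, forcing $e(\mcal{E})=0$.
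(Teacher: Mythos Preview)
Your proof is correct and is essentially the same as the paper's: both construct the section $a\cdot T$ of $\mcal{E}(1)$ on $\mbb{P}^1_X\simeq\bar{\Delta}^1_X$ (the paper writes $at_0$, you write $T_1\cdot a$), observe that its derived vanishing locus restricts to $V_0$ and $V_a$ at two rational points avoiding $\bar{\Delta}^1_\infty$, and then use pbf-locality of $\PMGL_S$ to identify the two pullbacks. Your write-up is somewhat more explicit about the base-change step in $\Corr^\fqsm(\Sch_S)$ and about why the nowhere-vanishing case gives $V_a=\varnothing$, but the underlying argument is identical.
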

\begin{proof}
Let $V$ be the derived vanishing locus of the global section $at_0$ of the twisting sheaf $\mcal{E}(1)$ on $\mbb{P}^1_X$, and let $j\colon V\to\mbb{P}^1_X$ be the inclusion.
Then $i_0^*V=V_0$ and $i_\infty^*V=V_a$.
Therefore, $i_0^*j_*(\mbf{1}_V)=j_{0*}(\mbf{1}_{V_0})$ and $i_\infty^*j_*(\mbf{1}_V)=j_{a*}(\mbf{1}_{V_a})$.
We conclude the proof by noting $i_0^*=i_\infty^*$.
\end{proof}

\begin{lemma}\label{lem:pbf}
Let $E$ be a pbf-local sheaf with transfers on $\Sch_S$.
Let $\mcal{E}$ be a vector bundle of rank $r\ge 1$ on a derived scheme $X\in\Sch_S$ and $\xi$ the Euler class of the canonical line bundle $\mcal{O}(1)$ on $\mbb{P}(\mcal{E})$.
Then the morphism
\[
	\sum_{i=0}^{r-1} (\xi^i\cdot p^*) \colon \bigoplus_{i=0}^{r-1} E(X) \to E(\mbb{P}(\mcal{E}))
\]
is an equivalence.
\end{lemma}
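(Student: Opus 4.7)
The plan is to prove the lemma by induction on the rank $r$, first for the trivial vector bundle $\mcal{E} = \mcal{O}_X^r$, and then to deduce the general case via Zariski descent. The base case $r = 1$ is immediate, since $\mbb{P}(\mcal{E}) = X$ and the map in question is the identity on $E(X)$.

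For the inductive step with $\mcal{E} = \mcal{O}_X^r$, we have $\mbb{P}(\mcal{E}) = \bar{\Delta}^{r-1} \times X$, and the pbf-local hypothesis with $n = r-1$ yields an equivalence
\[
	p^* \oplus \iota_* \colon E(X) \oplus E(\bar{\Delta}^{r-1}_\infty \times X) \xrightarrow{\simeq} E(\bar{\Delta}^{r-1} \times X),
\]
with $\iota \colon \bar{\Delta}^{r-1}_\infty \hookrightarrow \bar{\Delta}^{r-1}$ the hyperplane $V_+(U)$. Since $U$ is a degree one element cutting out $\bar{\Delta}^{r-1}_\infty$ as the derived vanishing locus of a section of $\mcal{O}(1)$ on $\bar{\Delta}^{r-1} \cong \mbb{P}^{r-1}$, Lemma \ref{lem:euler} identifies $\iota_*(\mbf{1}) = \xi$. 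Moreover $\iota^*\xi$ is the Euler class of $\mcal{O}(1)$ on $\bar{\Delta}^{r-1}_\infty$, and $p\circ\iota$ is the projection $\bar{\Delta}^{r-1}_\infty \times X \to X$. Applying the inductive hypothesis to $\bar{\Delta}^{r-1}_\infty \times X$ (a $\mbb{P}^{r-2}$-bundle over $X$) and then the projection formula (Lemma \ref{lem:pf}) in the form $\iota_*\iota^*(\alpha) = \alpha\cdot\iota_*(\mbf{1}) = \alpha\cdot\xi$, one finds that $\iota_*$ sends the basis element $\xi'^j \cdot p'^*(y)$ of $E(\bar{\Delta}^{r-1}_\infty \times X)$ to $\xi^{j+1}\cdot p^*(y)$. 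Substituting and reindexing yields the splitting $\bigoplus_{i=0}^{r-1} E(X) \xrightarrow{\simeq} E(\bar{\Delta}^{r-1} \times X)$ via $\sum \xi^i \cdot p^*$.

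For general $\mcal{E}$, I would argue by Zariski descent. Both sides of the desired map restrict to Zariski sheaves on the small site of $X$: for the right hand side, this uses that $p\colon \mbb{P}(\mcal{E}) \to X$ pulls back Zariski covers to Zariski covers together with the sheaf hypothesis on $E$. The comparison map is natural under restriction, since Euler classes and the $\PMGL$-module structure on $E$ are compatible with pullback. Because $\mcal{E}$ is Zariski-locally trivial, the map is an equivalence on a cover of $X$ by the trivial case already handled, and hence an equivalence globally.

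The main point requiring care is the identification of $\iota_*$ on the hyperplane with multiplication by $\xi$, via Lemma \ref{lem:euler} combined with the projection formula; in particular one must verify that the $\PMGL$-module structure interacts with transfers as expected. Once this is in hand, the inductive step assembles two copies of the pbf splitting into an $r$-fold splitting in powers of $\xi$, and the general case follows formally from descent.
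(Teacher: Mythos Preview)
Your proof is correct and follows essentially the same route as the paper: reduce to the trivial bundle by Zariski locality, then induct on $r$ using the pbf-splitting $p^*\oplus\iota_*$ together with the identification $\iota_*(\mbf{1})=\xi$ from Lemma~\ref{lem:euler} and the projection formula to propagate powers of $\xi$. The only cosmetic differences are that the paper places the Zariski reduction at the start rather than the end, and packages the projection-formula computation $\iota_*(\xi'^j\cdot p'^*y)=\xi^{j+1}\cdot p^*y$ into a single commutative square rather than writing it out.
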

\begin{proof}
Since the question is local on $X$, we may assume that the $\mcal{E}$ is trivial, i.e., $\mcal{E}=\mcal{O}_X^r$.
We prove by induction on $r$.
The case $r=1$ is trivial, and let $r\ge 2$.
Fix a linear embedding $j\colon\mbb{P}^{r-2}_X\to\mbb{P}^{r-1}_X$, which is given by $x\in H^0(\mbb{P}^{r-1}_X,\mcal{O}(1))$.
Then we have a cartesian diagram
\[
\xymatrix{
	\mbb{P}^{r-2}_X \ar[r]^-j \ar[d] & \mbb{P}^{r-1}_X \ar[d]^s \\
	\mbb{P}^{r-1}_X \ar[r]^-x & \mbb{V}(\mcal{O}(1))
}
\]
and $j_*(\mbf{1})=\xi$ by Lemma \ref{lem:euler}.
It follows that the diagram
\[
\xymatrix@C+2pc{
	\bigoplus_{i=0}^{r-1} E(X) \ar[r]^-{\sum_{i=0}^{r-1}(\xi^i\cdot p^*)} & E(\mbb{P}^{r-1}_X) \\
	\bigoplus_{i=0}^{r-1} E(X) \ar[r]^-{1 \oplus \sum_{i=0}^{r-2}(\xi^i\cdot p^*)} \ar@{=}[u] & E(X) \oplus E(\mbb{P}^{r-2}_X) \ar[u]_{p^*\oplus j_*}
}
\]
commutes.
The right vertical arrow is an equivalence since $E$ is pbf-local.
Therefore, the result follows from the induction hypothesis.
\end{proof}

\begin{corollary}\label{cor:pbf}
Let $E$ be a homotopy commutative algebra in $\SH^\tr_\pbf(\Sch_S)$, i.e., a commutative algebra object in the homotopy category.
Then there is a canonical ring isomorphism
\[
	\pi_*E(\mbb{P}^n_X) \simeq \pi_*E(X)[\xi]/\xi^{n+1}
\]
for every $X\in\Sch_S$ and $n\ge 0$.
\end{corollary}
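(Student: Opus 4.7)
The plan is to deduce the corollary directly from Lemma \ref{lem:pbf} applied with $\mcal{E}=\mcal{O}_X^{n+1}$, so that $\mbb{P}(\mcal{E})=\mbb{P}^n_X$. Applying that lemma term-by-term to each pbf-local sheaf with transfers $\Omega^{\infty-k}E$ and taking $\pi_*$ yields an isomorphism of graded $\pi_*E(X)$-modules
\[
\bigoplus_{i=0}^n \pi_*E(X)\cdot\xi^i \xrightarrow{\sim} \pi_*E(\mbb{P}^n_X),
\qquad
(a_i)_i \mapsto \sum_{i=0}^n p^*(a_i)\cdot\xi^i.
\]
Since $E$ is a homotopy commutative algebra, $\pi_*E(\mbb{P}^n_X)$ is a graded $\pi_*E(X)$-algebra, and the displayed map is the restriction of the ring homomorphism $\pi_*E(X)[\xi]\to\pi_*E(\mbb{P}^n_X)$ sending $\xi\mapsto\xi$. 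The additive isomorphism identifies the kernel as a principal ideal generated by a single monic relation $\xi^{n+1}-\sum_{i=0}^n a_i\xi^i$ with $a_i\in\pi_*E(X)$ uniquely determined by the basis expansion of $\xi^{n+1}$. It therefore suffices to prove that $\xi^{n+1}=0$ in $\pi_0 E(\mbb{P}^n_X)$, which forces each $a_i=0$ and gives the ideal $(\xi^{n+1})$ on the nose.

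I would prove $\xi^{n+1}=0$ by induction on $n$. The base case $n=0$ is immediate: on $\mbb{P}^0_X=X$ the line bundle $\mcal{O}(1)$ is trivial, hence admits a nowhere-vanishing global section, so $\xi=0$ by Lemma \ref{lem:euler}. For the inductive step, choose a linear embedding $i\colon\mbb{P}^{n-1}_X\hookrightarrow\mbb{P}^n_X$ realized as the derived vanishing locus of a section $s\in H^0(\mbb{P}^n_X,\mcal{O}(1))$. Lemma \ref{lem:euler} then yields $i_*(1)=e(\mcal{O}(1))=\xi$, while base-change compatibility of Euler classes (the remark after Definition \ref{def:euler}) gives $i^*\xi=e(i^*\mcal{O}(1))=\xi_{\mbb{P}^{n-1}_X}$. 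Combining these with the projection formula (Lemma \ref{lem:pf}) and the multiplication on $E$, I compute
\[
\xi^{n+1} \;=\; i_*(1)\cdot\xi^n \;=\; i_*\bigl(i^*(\xi^n)\bigr) \;=\; i_*\bigl(\xi_{\mbb{P}^{n-1}_X}^n\bigr),
\]
and the last expression vanishes by the induction hypothesis applied on $\mbb{P}^{n-1}_X$.

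I do not expect any serious technical obstacle here; all the ingredients — the additive basis of Lemma \ref{lem:pbf}, the vanishing criterion and derived-vanishing-locus interpretation of Lemma \ref{lem:euler}, base-change compatibility of Euler classes, and the projection formula of Lemma \ref{lem:pf} — are already established in the preceding sections. The only mildly delicate point is checking that the hypothesis of being a homotopy commutative algebra suffices to promote the diagrammatic projection formula of Lemma \ref{lem:pf} to an honest identity among elements of $\pi_*E(\mbb{P}^n_X)$; this is routine because the multiplication $E\otimes E\to E$ is available in the homotopy category and the square of Lemma \ref{lem:pf} is homotopy-coherent, so composing with $\mu$ gives the required equation on $\pi_0$.
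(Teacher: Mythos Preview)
Your proof is correct and follows essentially the same strategy as the paper: use Lemma~\ref{lem:pbf} for the additive decomposition, and then establish $\xi^{n+1}=0$ via the projection formula and Lemma~\ref{lem:euler}. The only cosmetic difference is that the paper argues in one step---$\xi^{n+1}$ is the Gysin pushforward from the (empty) derived intersection of $n+1$ distinct hyperplanes in $\mbb{P}^n$---whereas you unwind the same computation inductively, peeling off one hyperplane at a time.
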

\begin{proof}
By Lemma \ref{lem:pbf}, it remains to show that $\xi^{n+1}=0$ in $\pi_0\PMGL_S(\mbb{P}^n_X)$.
By the projection formula (Lemma \ref{lem:pf}), $\xi^{n+1}$ is the image of the unit by the Gysin morphism along the inclusion of the derived intersection of $(n+1)$-copies of a hyperplane in $\mbb{P}^n$, but this is empty and thus $\xi^{n+1}=0$.
\end{proof}

We prove that every pbf-local sheaf with transfers satisfies Bass fundamental theorem, cf.\ \cite[\S6]{TT}.
Consider the affine cover 
\[
	\{D_+(T_0),D_+(T_1)\}\to\bar{\Delta}^1.
\]
The Euler class $e(\mcal{O}(1))\in\pi_0\PMGL_S(\bar{\Delta}^1_X)$ is sent to zero in $\pi_0\PMGL_S(D_+(T_0)_X)$ and $\pi_0\PMGL_S(D_+(T_1)_X)$ by Lemma \ref{lem:euler}.
Therefore, it lifts to $\nu\in\pi_1\PMGL_S(D_+(T_0T_1)_X)$.

\begin{lemma}[Bass fundamental theorem]\label{lem:fund}
Let $E$ be a pbf-local sheaf with transfers on $\Sch_S$.
Then, for every $X\in\Sch_S$ and $i\ge 0$, there is a split exact sequence
\[
	0 \to \pi_{i+1}E(X) \to \pi_{i+1}E(D_+(T_0)_X) \oplus \pi_{i+1}E(D_+(T_1)_X) \to \pi_{i+1}E(D_+(T_0T_1)_X) \to \pi_iE(X) \to 0,
\]
where the multiplication by $\nu$ gives a splitting $\pi_iE(X)\to\pi_{i+1}E(D_+(T_0T_1)_X)$.
\end{lemma}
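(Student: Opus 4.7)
The plan is to extract the four-term exact sequence from the Mayer-Vietoris fiber sequence for the Zariski cover $\{D_+(T_0), D_+(T_1)\}$ of $\bar{\Delta}^1_X$, combined with the projective bundle formula for $E(\bar{\Delta}^1_X)$.

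First I would apply Lemma \ref{lem:pbf} to obtain the decomposition
\[
	E(\bar{\Delta}^1_X) \simeq E(X) \oplus \xi\cdot E(X),
\]
and observe that on each affine $D_+(T_k)_X$ the line bundle $\mcal{O}(1)$ admits the nowhere vanishing section $T_k$, so by Lemma \ref{lem:euler} the restriction of $\xi$ to $D_+(T_k)_X$ vanishes. Consequently the Zariski restriction $E(\bar{\Delta}^1_X)\to E(D_+(T_0)_X)\oplus E(D_+(T_1)_X)$ annihilates the $\xi$-summand and sends the constant $E(X)$-summand diagonally; in particular it is split injective on the constant summand and its kernel is exactly $\xi\cdot E(X)$.

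Since $E$ is pbf-local it takes values in grouplike $\mbb{E}_\infty$-monoids (cf.\ Corollary \ref{cor:fund}), so the Mayer-Vietoris square of $E$ for the above cover is a fiber square and yields a long exact sequence of homotopy groups with boundary map $\partial$. Applying the observation above at levels $i+1$ and $i$: the image of $\pi_{i+1}E(\bar{\Delta}^1_X)$ in the middle term is the diagonal $\pi_{i+1}E(X)$, while the image of $\partial$ equals the kernel of the restriction map $\pi_iE(\bar{\Delta}^1_X)\to\pi_iE(D_+(T_0)_X)\oplus\pi_iE(D_+(T_1)_X)$, which is $\pi_iE(X)\cdot\xi\cong\pi_iE(X)$. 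Combined with the split injectivity at the left end, this yields exactly the displayed four-term exact sequence.

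For the splitting, by the construction of $\nu$ we have $\partial(\nu) = \xi$ in $\pi_0\PMGL_S(\bar{\Delta}^1_X)$. Since $E$ is a $\PMGL_S$-module and the Mayer-Vietoris boundary map is $\PMGL_S$-linear, the map $x\mapsto\nu\cdot p^*(x)$, with $p\colon D_+(T_0T_1)_X\to X$, satisfies $\partial(\nu\cdot p^*x) = \xi\cdot p^*x$; under the identification $\xi\cdot\pi_iE(X)\cong\pi_iE(X)$ this recovers $x$, so multiplication by $\nu$ provides the desired splitting. The step I expect to require most care is the $\PMGL_S$-linearity of $\partial$; while immediate from naturality, it should be rigorously justified by exhibiting the Mayer-Vietoris square as one of $\PMGL_S$-module spectra, which follows from the fact that both the Zariski descent and the $\PMGL_S$-module structure on $E$ are levelwise. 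All the other steps are bookkeeping with the already-established tools.
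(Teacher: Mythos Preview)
Your argument is essentially the same as the paper's: both extract the four-term sequence from the Mayer--Vietoris long exact sequence combined with the pbf decomposition of $E(\bar{\Delta}^1_X)$, using that $\xi$ restricts to zero on each $D_+(T_k)_X$ and that $p^*$ is split injective.

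There is one genuine issue, however: your appeal to Corollary~\ref{cor:fund} is circular. In the paper, Corollary~\ref{cor:fund} is \emph{deduced from} Lemma~\ref{lem:fund} (the splitting exhibits $\pi_0E(X)$ as a retract of the abelian group $\pi_1E(D_+(T_0T_1)_X)$), so you cannot invoke it here. Fortunately the reference is also unnecessary: the Mayer--Vietoris square is cartesian simply because $E$ is a Zariski sheaf of spaces, and a cartesian square of pointed spaces already yields the required long exact sequence of homotopy groups with boundary $\partial$. The only place where grouplikeness might seem relevant is the $i=0$ case, but there the surjectivity onto $\pi_0E(X)$ and the splitting are supplied directly by multiplication by $\nu$, so no prior knowledge of grouplikeness is needed. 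Simply delete the parenthetical reference to Corollary~\ref{cor:fund} and justify the fiber square by the sheaf condition alone, and your proof matches the paper's.
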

\begin{proof}
Consider the diagram
\[
\xymatrix{
	& \pi_{i+1}E(D_+(T_0)_X)\oplus\pi_{i+1}E(D_+(T_1)_X) \ar[d] & \\
	& \pi_{i+1}E(D_+(T_0T_1)_X) \ar[d]^\partial \ar[ld] & \\
	\pi_iE(X) \ar[r]_-{e(\mcal{O}(1))\cdot p^*} \ar@{.>}@/^1pc/[ru] & \pi_iE(\bar{\Delta}^1_X) \ar[d] & \pi_iE(X) \ar[ld] \ar[l]_-{p^*} & \\
	& \pi_iE(D_+(T_0)_X)\oplus\pi_iE(D_+(T_1)_X). &
}
\]
The vertical sequence is exact since $E$ is a sheaf and the horizontal sequence exhibits $\pi_iE(\bar{\Delta}^1_X)$ as a direct sum of two copies of $\pi_iE(X)$ by Lemma \ref{lem:pbf}.
The boundary map $\partial$ factors through the left summand as indicated since the right diagonal map is injective.
Now the result follows from a simple diagram chase.
\end{proof}

\begin{corollary}\label{cor:fund}
Every pbf-local sheaf with transfers on $\Sch_S$ has its values in grouplike $\mbb{E}_\infty$-spaces.
\end{corollary}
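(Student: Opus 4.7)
The plan is to deduce the corollary directly from Bass fundamental theorem (Lemma \ref{lem:fund}). Recall that every presheaf with transfers takes values in $\mbb{E}_\infty$-spaces, so for any $X\in\Sch_S$ the set $\pi_0 E(X)$ carries a canonical structure of a commutative monoid; what needs proving is that it is in fact a group.

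First I would apply Lemma \ref{lem:fund} with $i=0$ to an arbitrary $X\in\Sch_S$ to obtain a split surjection of commutative monoids
\[
	\partial \colon \pi_1 E(D_+(T_0T_1)_X) \twoheadrightarrow \pi_0 E(X),
\]
with section given by multiplication by the class $\nu$. The source, being $\pi_1$ of a pointed space, is automatically an abelian group; hence $\pi_0 E(X)$ is the surjective image of an abelian group under a monoid homomorphism, and so is itself an abelian group. Explicitly, for $x\in\pi_0 E(X)$, one lifts to $\tilde x := \nu\cdot p^*(x)$ and then verifies that $\partial(-\tilde x)$ is a two-sided inverse:
\[
	x + \partial(-\tilde x) = \partial(\tilde x) + \partial(-\tilde x) = \partial(\tilde x - \tilde x) = 0.
\]

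The one subtle point — which I expect to be the main obstacle — is the assertion that $\partial$ is a homomorphism of commutative monoids and not merely a map of pointed sets. This holds because the boundary arises from the Mayer--Vietoris pullback square for the Zariski cover $\{D_+(T_0),D_+(T_1)\}$ of $\bar\Delta^1_X$, which is a pullback square of $\mbb{E}_\infty$-spaces (the forgetful functor from $\mbb{E}_\infty$-spaces to spaces preserves limits); the associated fiber sequence therefore lives in $\mbb{E}_\infty$-spaces, and every map in its long exact sequence of homotopy groups is a homomorphism of commutative monoids. Once this is granted, the rest of the argument is the diagram chase above.
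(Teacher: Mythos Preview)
Your argument is correct and matches the paper's one-line proof: $\pi_0 E(X)$ is a quotient of the group $\pi_1 E(D_+(T_0T_1)_X)$ by Lemma~\ref{lem:fund}, hence a group. One small imprecision worth noting: the Mayer--Vietoris boundary actually lands in $\pi_0 E(\bar\Delta^1_X)$ rather than $\pi_0 E(X)$, and the surjection of Lemma~\ref{lem:fund} is its composite with the projection onto the $\iota_*$-summand of the pbf splitting---but that projection is likewise induced by maps of $\mbb{E}_\infty$-spaces, so your monoid-homomorphism justification still applies.
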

\begin{proof}
It suffices to show that if $E$ is a pbf-local sheaf with transfers on $\Sch_S$ then $\pi_0E(X)$ is a group for every $X\in\Sch_S$.
Since $\pi_0E(X)$ is a quotient of $\pi_1E(D_+(T_0T_1)_X)$ by Lemma \ref{lem:fund}, it is a group.
\end{proof}

\section{Moduli stack of vector bundles}\label{vect}

In this section, we study the homotopy type of the moduli stack of vector bundles.
We start by recalling the construction of the moduli stack of vector bundles.
Let $\Sch$ be the $\infty$-category of all derived schemes.
Then there is a functor $\QCoh\colon\Sch^\op\to\Cat_\infty$ classifying all quasi-coherent modules, which is constructed as in \cite[\S6.2.2]{SAG}, where $\Cat_\infty$ denotes the $\infty$-category of possibly large $\infty$-categories.
For a non-negative integer $n$, let $\QCoh^\mrm{lfree}_n$ be the subfunctor of $\QCoh$ spanned by locally free quasi-coherent modules of rank $n$.
Then the \textit{moduli stack $\Vect_n$ of vector bundles of rank $n$} is the functor defined as the composite
\[
	\Vect_n \colon\: \Sch^\op \xrightarrow{\QCoh^\mrm{lfree}_n} \Cat_\infty \xrightarrow{(-)^\sim} \mscr{S},
\]
where $(-)^\sim$ is the functor taking the maximal subgroupoids of $\infty$-categories.
We write $\Pic:=\Vect_1$, which is by definition the \textit{Picard stack}.

The presheaf $\Vect_n$ is a fpqc sheaf since so is the presheaf $\mrm{QCoh}$ classifying quasi-coherent modules by \cite[Proposition 6.2.3.1]{SAG} and the property being locally free of rank $n$ is local for the fpqc topology by \cite[Proposition 2.9.1.4]{SAG}.
The presheaf $\Vect_n$ is finitary in the sense that it carries filtered limits of qcqs derived schemes with affine transition maps to colimits by \cite[Corollary 4.5.1.10]{SAG}.
For a qcqs derived scheme $S$, we write $\Vect_{n,S}$ for the restriction of $\Vect_n$ to $\Sch_S$.
Then $\Vect_{n,S}$ is compatible with base changes, i.e., for every morphism $f\colon T\to S$ of derived schemes, the map $f^*\Vect_{n,S}\to\Vect_{n,T}$ is a Zariski local equivalence, cf. \cite[Proposition A.0.4]{EHKSYb}

For non-negative integers $n$ and $N$, the $n$-th grassmannian $\Gr_n(\mcal{O}^N)$ of $\mcal{O}^N$ classifies all quotients $\mcal{O}^N\twoheadrightarrow\mcal{E}$, where $\mcal{E}$ is a vector bundle of rank $n$.
The projection $\mcal{O}^{N+1}\to \mcal{O}^N$ discarding the last factor induces an immersion $\Gr_n(\mcal{O}^N)\hookrightarrow\Gr_n(\mcal{O}^{N+1})$.
Let $\Gr_n:=\colim_N\Gr_n(\mcal{O}^N)$ and regard it as an ind-scheme.
We write $\mbb{P}^\infty:=\Gr_1$, which is the infinite projective space.

\begin{theorem}\label{thm:vect}
For every qcqs derived scheme $S$ and $n\ge 0$, the canonical map
\[
	L_\mot\gamma^*\Gr_{n,S} \to L_\mot\gamma^*\Vect_{n,S}
\]
is an equivalence.
\end{theorem}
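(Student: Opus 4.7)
The plan is to induct on $n$. The case $n = 0$ is trivial since both $\Gr_{0,S}$ and $\Vect_{0,S}$ are terminal, and the cases $n \geq 2$ will reduce to $n = 1$ via the projective bundle formula combined with the Chern class machinery of Section \ref{chern}. The case $n = 1$, i.e.\ $L_\mot\gamma^*\mbb{P}^\infty \simeq L_\mot\gamma^*\Vect_{1,S}$, is the substance of the present section and requires a separate, direct argument.

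For $n = 1$, I would invoke Lemma \ref{lem:approx}, which computes $L_\mot$ via the geometric realization of $\cofib(E(\bar{\Delta}^\bullet_{\infty,X}) \xrightarrow{\iota_*} E(\bar{\Delta}^\bullet_X))$. The problem reduces to comparing the semi-cosimplicial cofibers of $\gamma^*\mbb{P}^\infty$ and $\gamma^*\Vect_{1,S}$ evaluated on $\bar{\Delta}^\bullet_X$. The ``key technical lemma'' promised in the introduction should then assert, roughly, that every line bundle on $\bar{\Delta}^k_X$ is canonically presented by a system of generating sections once one quotients by the image of the Gysin pushforward from $\bar{\Delta}^k_{\infty,X}$---the deformation to the hyperplane at infinity supplying the trivialization data that in the Morel-Voevodsky setting would come for free from $\mbb{A}^1$-contractibility.

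For the inductive step $n \geq 2$, I would use the tautological projective bundle $\mbb{P}(\mcal{U}) \to \Vect_{n,S}$, where $\mcal{U}$ is the universal rank-$n$ bundle, together with the further map $\mbb{P}(\mcal{U}) \to \Vect_{n-1,S}$ obtained by modding out the tautological line subbundle; on the Grassmannian side one has the analogous tower $\mbb{P}(\mcal{Q}) \to \Gr_{n,S} \to \Gr_{n-1,S}$. By Lemma \ref{lem:pbf}, these projective bundles split in a controlled way after applying a pbf-local $E$, and combining this splitting with the inductive hypothesis in rank $n-1$ and the Chern class computation of $E^*(\Gr_{n,S})$ to be carried out in Section \ref{chern} gives the equivalence in rank $n$.

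The main obstacle is the case $n = 1$. Without $\mbb{A}^1$-invariance one cannot conclude $\Vect_{1,S}(\bar{\Delta}^k_X) = \Vect_{1,S}(X)$: nontrivial line bundles such as $\mcal{O}(1)$ must be matched against pushforward data from $\bar{\Delta}^k_{\infty,X}$. Making this matching precise---so that the cofiber against $\iota_*$ really does identify $\Vect_{1,S}(\bar{\Delta}^k_X)$ with the groupoid of rank-one quotients parametrized by $\mbb{P}^\infty(\bar{\Delta}^k_X)$---is where the specific compactification $\bar{\Delta}^\bullet$ and the pbf-local replacement for $\mbb{A}^1$-contractibility both play essential roles.
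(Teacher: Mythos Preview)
Your inductive step has a real gap. The map $\mbb{P}(\mcal{U})\to\Vect_{n-1}$ obtained by taking the kernel of the tautological quotient is \emph{not} a projective bundle: its fibers over $\Vect_{n-1}\times\Pic$ are stacks of extensions, i.e., shifted vector bundle stacks. In $\mbb{A}^1$-local motivic homotopy theory those are contractible, but here we have only the projective bundle formula, which says nothing about affine or linear fibrations. So knowing $L_\mot\gamma^*\Gr_{n-1}\simeq L_\mot\gamma^*\Vect_{n-1}$ and knowing the pbf splitting of $E(\mbb{P}(\mcal{U}))$ over $\Vect_n$ does not let you recover $E(\Vect_n)$ from lower-rank data. (Note also that there is no map $\Gr_n\to\Gr_{n-1}$ as your tower suggests; the inductive structure on the Grassmannian side in Lemma~\ref{lem:grass} goes through $\mbb{P}(\mcal{Q})\simeq\Gr_{n-1}(\mcal{E}')$ over $\mbb{P}(\mcal{E})$, which has no analogue over $\Vect_n$.)

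The paper does not induct on $n$. Instead it runs the same two-sided argument for every $n$ at once. The \emph{right} inverse comes from the key Lemma~\ref{lem:key}, which is already stated for arbitrary $n$: one cannot solve the naive lifting problem against $|\Gr_n(\bar\Delta^\bullet)|$, but after twisting by $\mcal{O}(k+1)$ one can lift against the $k$-th coskeleton, and pbf-locality shows this twisted map is homotopic to the canonical one (part (B)). Taking the limit over $k$ gives a map $\Vect_n\to\gamma_*L_\mot\gamma^*\Gr_n$. The \emph{left} inverse is built by defining Chern classes $c_i(\mcal{E}_\univ)\in\pi_0\PMGL(\Vect_n)$ of the universal bundle and using the Grassmannian computation (Lemma~\ref{lem:grass}) to lift the identity of $\Gr_n$ through $(L_\mot\gamma^*\Gr_n)(\Vect_n)$. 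The only reason $n=1$ is singled out first (Corollary~\ref{cor:pic}) is that it is needed to set up the Chern class and formal group law machinery in Section~\ref{chern}; once that is in place, the argument for general $n$ is uniform, not inductive. Your description of the $n=1$ case as a direct comparison of cofibers misses both halves of this: the $\xi$-completeness argument of Lemma~\ref{lem:pic} for the left inverse, and the $\mcal{O}(k+1)$-twisted coskeletal lifting for the right.
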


The proof is completed in the next section.
In this section, we prove a key technical lemma (Lemma \ref{lem:key}) and prove the equivalence for $n=1$.
Note that we may assume that $S=\Spec(\mbb{Z})$ to prove Theorem \ref{thm:vect} since both sides commute with base changes.
In the rest of this section, we work over the $\infty$-category $\Sch_\mbb{Z}:=\Sch_{\Spec(\mbb{Z})}$ unless otherwise stated.
\begin{lemma}\label{lem:pic}
The canonical map
\[
	L_\mot \gamma^*\mbb{P}^\infty \to L_\mot \gamma^*\Pic
\]
admits a left inverse.
\end{lemma}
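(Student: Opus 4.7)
The plan is to construct the left inverse via delooping. Since both $L_\mot\gamma^*\Pic = L_\mot\gamma^* B\mathbb{G}_m$ and $L_\mot\gamma^*\mbb{P}^\infty$ are grouplike $\mbb{E}_\infty$-sheaves with transfers (Corollary \ref{cor:fund}), and $\Pic = B\mathbb{G}_m$ is the $\mbb{E}_\infty$-delooping of $\mathbb{G}_m$, the desired $\mbb{E}_\infty$-retraction $r\colon L_\mot\gamma^*\Pic \to L_\mot\gamma^*\mbb{P}^\infty$ is equivalent, by the delooping adjunction, to a pointed map $\mathbb{G}_m \to \Omega_\ast L_\mot\gamma^*\mbb{P}^\infty$ of sheaves with transfers.

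I would construct this loop by mimicking the definition of the class $\nu$ given just before Lemma \ref{lem:fund}. Consider the class $[\iota]\in\pi_0(L_\mot\gamma^*\mbb{P}^\infty)(\mbb{P}^1)$ represented by the inclusion $\iota\colon \mbb{P}^1 = \bar{\Delta}^1\hookrightarrow\mbb{P}^\infty$ (which classifies $\mcal{O}(1)$ together with its two tautological generators). The key point to verify is that its restrictions to the two affine charts $D_+(T_0), D_+(T_1)\cong\mbb{A}^1$ both become motivically equivalent to the basepoint. Granted this, Zariski descent for the sheaf $L_\mot\gamma^*\mbb{P}^\infty$ on the cover $\{D_+(T_0), D_+(T_1)\}$ of $\mbb{P}^1$ lifts $[\iota]$ to a natural class in $\pi_1 L_\mot\gamma^*\mbb{P}^\infty(D_+(T_0T_1)) = \pi_1 L_\mot\gamma^*\mbb{P}^\infty(\mathbb{G}_m)$, which supplies the sought-after pointed map. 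The left-inverse property of the delooped $r$ can then be verified on the universal example by observing that, on each finite $\mbb{P}^N\subset\mbb{P}^\infty$, the canonical presentation $\mcal{O}^{N+1}\twoheadrightarrow\mcal{O}(1)$ provides \v{C}ech data matching precisely the cocycle underlying $r$.

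The main obstacle is the key claim that $[\iota]|_{\mbb{A}^1}$ equals the basepoint in $L_\mot\gamma^*\mbb{P}^\infty$ at the space level, not merely cohomologically. Without $\mbb{A}^1$-invariance the restriction $\mbb{A}^1\hookrightarrow\mbb{P}^1\subset\mbb{P}^\infty$ is not literally constant; a cohomological shadow of the claim does follow from Lemma \ref{lem:pbf} combined with Lemma \ref{lem:euler}, since $\mcal{O}(1)$ admits a nowhere-vanishing section on each affine chart and thus the $\xi$-summand of the projective bundle decomposition dies after restriction. Promoting this cohomological identity to a space-level equality in $L_\mot\gamma^*\mbb{P}^\infty$ is where the delicate interaction between the pbf-local structure, the transfer structure, and the semiadditivity of $\Corr^\fqsm(\Sch_S)$ enters.
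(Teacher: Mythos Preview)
Your approach differs substantially from the paper's and contains a genuine gap.

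The paper's argument is direct and does not use delooping at all. It defines the Euler class $\xi$ of the universal line bundle on $\Pic$, observes that $(L_\mot\gamma^*\mbb{P}^\infty)(\Pic)$ is $\xi$-complete (being a limit over the bar resolution of $B\mbb{G}_m$, on each term of which $\xi$ vanishes), and then uses Lemma~\ref{lem:pbf} to see that
\[
\sum_{i\ge 0}(\xi^i\cdot p^*)\colon \prod_{i\ge 0}(L_\mot\gamma^*\mbb{P}^\infty)(\Spec\mbb{Z}) \to (L_\mot\gamma^*\mbb{P}^\infty)(\Pic)
\]
becomes an equivalence after composing with restriction along $\mbb{P}^\infty\to\Pic$. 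This lifts the identity of $L_\mot\gamma^*\mbb{P}^\infty$ to a point of $(L_\mot\gamma^*\mbb{P}^\infty)(\Pic)$, i.e.\ to a map $\gamma^*\Pic\to L_\mot\gamma^*\mbb{P}^\infty$, which is the desired left inverse.

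Your delooping strategy has a structural problem even before the ``main obstacle'' you flag. The universal property of $\Pic=B\mbb{G}_m$ says that pointed maps $B\mbb{G}_m\to X$ correspond to maps of \emph{$\mbb{E}_1$-groups} $\mbb{G}_m\to\Omega X$, where $\mbb{G}_m$ carries its \emph{multiplicative} structure. A class in $\pi_1 L_\mot\gamma^*\mbb{P}^\infty(\mbb{G}_m)$ yields only a homotopy class of maps of sheaves with transfers; it carries no $\mbb{E}_1$-coherence for that multiplication. The semiadditivity of $\Shv^\tr_\pbf$ does make every morphism automatically $\mbb{E}_\infty$, but only for the \emph{additive} structure coming from Corollary~\ref{cor:fund}, which is not the structure used to deloop $\mbb{G}_m$ to $\Pic$. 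So even if you could lift $[\iota]$ to a loop class $\nu$, promoting the resulting map to an $\mbb{E}_1$-group map is a nontrivial coherence problem that the proposal does not address. Combined with the gap you yourself acknowledge---that the space-level triviality of $[\iota]|_{\mbb{A}^1}$ is not established---the proposal does not constitute a proof.
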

\begin{proof}
Consider the universal line bundle $\mcal{L}_\univ$ on $\Pic$.
Then the total space $\mbb{V}(\mcal{L}_\univ)$ is defined as the stack classifying line bundles with a global section, i.e., $\mbb{V}(\mcal{L}_\univ)(X)$ is the space of all maps $\mcal{O}_X\to\mcal{L}$ with $\mcal{L}$ being a line bundle on $X$.
We have a map $s\colon\Pic\to\mbb{V}(\mcal{L}_\univ)$ classifying zero sections, which can be expressed as a colimit of quasi-smooth closed immersions.
Hence, the Gysin morphism $s_*$ is well-defined and the Euler class of $\mcal{L}_\univ$ is defined by $\xi:=s^*s_*(\mbf{1})\in \pi_0\PMGL(\Pic)$.
We note that $\PMGL(\Pic)$ is complete with respect to $\xi$ since it is a limit of $\PMGL(\mbb{G}_m^{\times n})$ where $\xi=0$ and a limit of complete modules is complete.
Then we have a commutative diagram
\[
\xymatrix{
	\prod_{i=0}^\infty (L_\mot\gamma^*\mbb{P}^\infty)(\Spec(\mbb{Z})) \ar[d]_{\sum_{i=0}^\infty (\xi^i\cdot p^*)} \ar[rd]^\simeq & \\ 
	(L_\mot\gamma^*\mbb{P}^\infty)(\Pic) \ar[r] &
	(L_\mot\gamma^*\mbb{P}^\infty)(\mbb{P}^\infty).
}
\]
The left vertical map is well-defined because of the $\xi$-completeness of $\PMGL(\Pic)$.
The diagonal arrow is an equivalence by Lemma \ref{lem:pbf}.
In particular, the canonical map $\gamma^*\mbb{P}^\infty\to L_\mot\gamma^*\mbb{P}^\infty$ lifts to a map $\gamma^*\Pic\to L_\mot\gamma^*\mbb{P}^\infty$ up to homotopies, which gives a desired left inverse.
\end{proof}

The next goal is to construct a right inverse of $L_\mot\gamma^*\Gr_n\to L_\mot\gamma^*\Vect_n$.
In order to do that, we pursue the idea of \textit{closed gluing} as in \cite[\S{}A.2]{EHKSYa}, which provides a method to show that some map is an $\mbb{A}^1$-homotopy equivalence.
See \cite[Proposition 4.7]{HJNY}, where it is used to show that the canonical map $\Gr_n\to\Vect_n$ is an $\mbb{A}^1$-homotopy equivalence on affine schemes.
Adopting this to our situation, we can try to solve the lifting problem
\[
\xymatrix{
	& \lvert \Gr_n(\bar{\Delta}^\bullet) \rvert \ar[d] \\
	\Vect_n \ar[r]^-{p^*} \ar@{.>}@/^1pc/[ru]^-{\exists?} & \lvert \Vect_n(\bar{\Delta}^\bullet) \rvert.
}
\]
However, it will turn out soon that this is impossible, mainly because the sheaf cohomology of $\bar{\Delta}^*$ is non-trivial.
To fix this, note that for each $l\ge 1$ the twisting sheaves $\mcal{O}(l)$ on $\bar{\Delta}^*$ define a point of the semi-simplicial sheaf $\Pic(\bar{\Delta}^\bullet)$.
Then we prove the following.

\begin{lemma}\label{lem:key}
Let $k$ be a non-negative integer.
\begin{enumerate}[label=\upshape{(\Alph*)},leftmargin=*]
\item The diagram
\[
\xymatrix@C+1.5pc{
	& \lvert \cosk_k(\Gr_n(\bar{\Delta}^\bullet)) \rvert \ar[d] \\
	\Vect_n \ar[r]^-{\mcal{O}(k+1)\otimes p^*} \ar@{.>}@/^1.5pc/[ru] & \lvert \cosk_k(\Vect_n(\bar{\Delta}^\bullet)) \rvert
}
\]
admits a lift Zariski locally as indicated.
\item The composite
\[
	\Vect_n
	\xrightarrow{\mcal{O}(k+1)\otimes p^*} \lvert \Vect_n(\bar{\Delta}^\bullet) \rvert
	\to \gamma_*L_\mot\gamma^*\Vect_n
\]
is homotopic to the canonical map on its finite truncations, where the second arrow is the map in Lemma \ref{lem:approx}.
\end{enumerate}
\end{lemma}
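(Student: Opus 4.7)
For (A), I would build the lift explicitly via monomial generators. Working Zariski locally on $X$, fix a surjection $\varphi \colon \mcal{O}_X^N \twoheadrightarrow \mcal{E}$. For each $m \le k$, the line bundle $\mcal{O}(k+1)$ on $\bar{\Delta}^m = \mbb{P}^m$ is globally generated by the $\binom{m+k+1}{k+1}$ monomials of degree $k+1$ in the homogeneous coordinates $T_0, \dotsc, T_m$, yielding a surjection $\psi_m \colon \mcal{O}^{M_m} \twoheadrightarrow \mcal{O}(k+1)$. Under a face map $\theta \colon \bar{\Delta}^n \to \bar{\Delta}^m$, each such monomial pulls back either to a corresponding monomial on $\bar{\Delta}^n$ or to zero, and by indexing the generators by multisets of indices one checks that $\theta^* \psi_m = \iota_\theta \circ \psi_n$ for a canonical block inclusion $\iota_\theta \colon \mcal{O}^{M_n} \hookrightarrow \mcal{O}^{M_m}$. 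Since $\Gr_n = \colim_N \Gr_n(\mcal{O}^N)$ absorbs these block inclusions, the tensor products $\varphi \otimes \psi_m$ classify compatible maps $X \times \bar{\Delta}^m \to \Gr_n$ for $m \le k$; as $\cosk_k$ is determined by its levels $\le k$, this gives the required lift.

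For (B), the plan is to exploit the projective bundle decomposition in the motivic target together with Lemma \ref{lem:approx}. In the pbf-local target $L_\mot\gamma^*\Vect_n$, any class on $\bar{\Delta}^m \times X$ decomposes (Lemma \ref{lem:pbf}) into a $p^*$-part and an $\iota_*$-part. The cosimplicial line bundle $\mcal{O}(k+1)$ is trivial on the affine complement $D_+(U) = \bar{\Delta}^m \smallsetminus \bar{\Delta}^m_\infty$, so its class in the motivic target takes the form $p^*(1) + \iota_*(\text{correction})$. By the projection formula (Lemma \ref{lem:pf}), the cosimplicial assignment $\mcal{E} \mapsto p^*\mcal{E} \otimes \mcal{O}(k+1)$ thus differs from the canonical $\mcal{E} \mapsto p^*\mcal{E}$ by an element in the image of $\iota_*$; in the cofiber $\lvert \cofib(\iota_*) \rvert$ appearing in Lemma \ref{lem:approx} this correction is annihilated. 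The finite truncation hypothesis enters because $\Vect_n$ itself is not pbf-local, so this decomposition argument is effective only after passing to a finite Postnikov stage of the target, where the pbf-style projection and the vanishing of the $\iota_*$-term become literal rather than merely formal.

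The principal obstacle for (A) is maintaining strict (rather than merely up-to-homotopy) semi-simplicial compatibility: the widths $M_m$ grow with $m$, so one must match the multiset indexing carefully across all face maps so that each $\theta^* \psi_m$ is an honest block inclusion into a common ambient $\Gr_n(\mcal{O}^{M_m})$. For (B), the main subtlety is upgrading the formal pbf decomposition of classes in the motivic target to a genuine cosimplicial equivalence of classifying maps into $\gamma_* L_\mot \gamma^* \Vect_n$, and pinning down the precise Postnikov level at which the $\iota_*$-correction provably vanishes.
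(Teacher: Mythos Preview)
Your approach to (A) via explicit monomial generators is fundamentally different from the paper's, and the obstacle you flag at the end is fatal, not merely technical. Take $k=1$: on $\bar{\Delta}^1=\mbb{P}^1$ you use the basis $T_0^2, T_0T_1, T_1^2$ for $\mcal{O}(2)$; restricting along the two face maps $\bar{\Delta}^0\rightrightarrows\bar{\Delta}^1$ sends this triple to $(1,0,0)$ and $(0,0,1)$ respectively. Both are surjections $\mcal{O}^3\twoheadrightarrow\mcal{O}$, but they are \emph{different} points of $\Gr_1(\mcal{O}^3)=\mbb{P}^2$, and the second is not obtained from the degree-$0$ surjection $\mcal{O}^1\twoheadrightarrow\mcal{O}$ via the standard inclusion $\mcal{O}^1\hookrightarrow\mcal{O}^3$ defining the ind-structure on $\Gr_1$. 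So ``$\Gr_n$ absorbs these block inclusions'' is false: the sequential colimit $\colim_N\Gr_n(\mcal{O}^N)$ only identifies surjections that agree after padding zeros at the \emph{end}, and no single ordering of monomials makes every face-restriction an initial segment. The paper sidesteps this entirely: it forms the pullback $F$ of $\cosk_k(\Gr_n(\bar{\Delta}^\bullet))\to\cosk_k(\Vect_n(\bar{\Delta}^\bullet))\leftarrow\Vect_n$ and proves $F\to\Vect_n$ is a trivial Kan fibration. The lifting condition, after invoking closed gluing, becomes: extend a surjection $\mcal{O}^{\oplus\infty}\twoheadrightarrow\mcal{O}(k+1)^{\oplus n}$ from $\partial\bar{\Delta}^m_A$ to $\bar{\Delta}^m_A$, which follows from $H^1(\bar{\Delta}^m_A,\mcal{O}(k-m))=0$ and global generation of $\mcal{O}(k-m)$ (both using $m\le k$). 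The Kan-fibration formalism absorbs all semi-simplicial coherence automatically.

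For (B), your instinct that triviality of $\mcal{O}(k+1)$ away from $\bar{\Delta}^m_\infty$ is the point is correct, but the decomposition argument you sketch is more elaborate than needed, and your diagnosis of why truncation enters is off. The paper argues directly: for any point $i\colon\{*\}\hookrightarrow\bar{\Delta}^m$ avoiding $\bar{\Delta}^m_\infty$, the restriction $i^*$ inverts $p^*$ on the quotient $\gamma_*L_\mot\gamma^*\Vect_n(\bar{\Delta}^m)/\gamma_*L_\mot\gamma^*\Vect_n(\bar{\Delta}^m_\infty)$, and since $i^*\mcal{O}(k+1)$ is trivial, the composite $i^*\circ(\mcal{O}(k+1)\otimes p^*)$ is the canonical map. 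Finite truncation enters only to pass between $\lvert\cosk_m(-)\rvert$ and $\lvert-\rvert_{\le m}$, not because of any failure of pbf-locality of $\Vect_n$.
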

\begin{proof}
Note that the $\infty$-category $\Shv(\Sch_\mbb{Z})$ of Zariski sheaves is a hypercomplete $\infty$-topos since each derived scheme $X\in\Sch_\mbb{Z}$ is assumed to be of finite presentation over $\mbb{Z}$.

(A) Let $F$ be the semi-simplicial object in $\Shv(\Sch_\mbb{Z})$ defined by the pullback square
\[
\xymatrix@C+1.5pc{
	F \ar[r] \ar[d] & \cosk_k(\Gr_n(\bar{\Delta}^\bullet)) \ar[d] \\
	\Vect_n \ar[r]^-{\mcal{O}(k+1)\otimes p^*} & \cosk_k(\Vect_n(\bar{\Delta}^\bullet)).
}
\]
We show that the induced map $\lvert F\rvert\to\lvert\Vect_n\rvert$ on the geometric realization is an equivalence in $\Shv(\Sch_\mbb{Z})$.
By \cite[Theorem A.5.3.1]{SAG}, it suffices to show that the map $F\to\Vect_n$ is a trivial Kan fibration, i.e., the map
\[
	F[\Delta^m] \to \Vect_n[\Delta^m]\times_{\Vect_n[\partial\Delta^m]}F[\partial\Delta^m]
\]
is an effective epimorphism for every $m\ge 0$, see [op.\ cit., A.5.1.6] for the notation.
This map is identified with the map
\[
	\Vect_n\times_{\cosk_k(\Vect_n(\bar{\Delta}^\bullet))[\Delta^m]}\cosk_k(\Gr_n(\bar{\Delta}^\bullet))[\Delta^m]
	\to \Vect_n\times_{\cosk_k(\Vect_n(\bar{\Delta}^\bullet))[\partial\Delta^m]}\cosk_k(\Gr_n(\bar{\Delta}^\bullet))[\partial\Delta^m]
\]
and it is an equivalence for $m>k$, and thus we may assume that $m\le k$.
Then, since $\Gr_n$ and $\Vect_n$ satisfy closed gluing by \cite[Example 17.3.1.3]{SAG}, the map is further identified with
\[
	\Vect_n\times_{\Vect_n(\bar{\Delta}^m)}\Gr_n(\bar{\Delta}^m)
	\to \Vect_n\times_{\Vect_n(\partial\bar{\Delta}^m)}\Gr_n(\partial\bar{\Delta}^m)
\]
by \cite[Lemma A.2.6]{EHKSYa}.
Over an animated ring $A$ such that $\pi_0(A)$ is local, $\pi_0$ of the right hand side is identified with the set of homotopy equivalent classes of surjections $\mcal{O}^{\oplus\infty}_{\partial\bar{\Delta}^m_A}\to\mcal{O}(k+1)^{\oplus n}\vert_{\partial\bar{\Delta}^m_A}$.
We have to show that such a surjection lifts to a surjection $\mcal{O}^{\oplus\infty}_{\bar{\Delta}^m_A}\to \mcal{O}(k+1)^{\oplus n}$ up to homotopies.
Consider the diagram
\[
\xymatrix{
	& \mcal{O}^{\oplus\infty}_{\bar{\Delta}^m_A} \ar[r] \ar@{.>}[d]^{\alpha'} & \mcal{O}^{\oplus\infty}_{\partial\bar{\Delta}^m_A} \ar[d]^\alpha \\
	\mcal{O}(k-m)^{\oplus n} \ar[r] & \mcal{O}(k+1)^{\oplus n} \ar[r] & \mcal{O}(k+1)^{\oplus n}\vert_{\partial\bar{\Delta}^m_A}
}
\]
for a given surjection $\alpha$.
Note that the bottom row is a fiber sequence.
Since $H^1(\bar{\Delta}^m_A,\mcal{O}(k-m)^{\oplus n})=0$, there exists a lift $\alpha'$ as indicated.
Furthermore, since $\mcal{O}(k-m)$ is globally generated, we can add extra sections to ensure that the lift $\alpha'$ is surjective.
This completes the proof of (A).

(B) For each $m\ge 0$, we have a commutative diagram of semi-simplicial objects 
\[
\xymatrix@C-0.5pc{
	\Vect_n\otimes\Delta^m \ar[d]^\simeq \ar[rr]^-{\mcal{O}(k+1)\otimes p^*} & &
	\Vect_n(\bar{\Delta}^m)\otimes\Delta^m \ar[d] \ar[r] &
	\displaystyle \frac{\gamma_*L_\mot\gamma^*\Vect_n(\bar{\Delta}^m)}{\gamma_*L_\mot\gamma^*\Vect_n(\bar{\Delta}^m_\infty)}\otimes\Delta^m \ar[d] &
	\gamma_*L_\mot\gamma^*\Vect_n\otimes\Delta^m \ar[l]_-{p^*}^-\simeq \ar[d] \\
	\Vect_n \ar[rr]^-{\mcal{O}(k+1)\otimes p^*} & &
	\cosk_m(\Vect_n(\bar{\Delta}^\bullet)) \ar[r] & 
	\displaystyle \cosk_m\biggl(\frac{\gamma_*L_\mot\gamma^*\Vect_n(\bar{\Delta}^\bullet)}{\gamma_*L_\mot\gamma^*\Vect_n(\bar{\Delta}^\bullet_\infty)}\biggr) &
	\cosk_m(\gamma_*L_\mot\gamma^*\Vect_n). \ar[l]_-{p^*}^-\simeq
}
\]
Hence, it suffices to show that the top horizontal composition is homotopic to the canonical map.
Note that, for any point $*\in\bar{\Delta}^m$ not meeting $\bar{\Delta}^m_\infty$, the pullback along the inclusion $i\colon\{*\}\to\bar{\Delta}^m$ induces an inverse of $p^*$ on the top.
Since $i^*\mcal{O}(k+1)$ is trivial, the assertion follows.
\end{proof}

\begin{corollary}\label{cor:key}
For every $n,k\ge 0$, the diagram 
\[
\xymatrix{
	& (\gamma_*L_\mot \gamma^*\Gr_n)_{\le k} \ar[d] \\
	\Vect_n \ar[r] \ar@{.>}@/^1pc/[ru] & (\gamma_*L_\mot \gamma^*\Vect_n)_{\le k}
}
\]
admits a lift Zariski locally as indicated, where $(-)_{\le k}$ denotes the pointwise $k$-truncation.
\end{corollary}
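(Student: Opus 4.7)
The plan is to combine the two parts of Lemma \ref{lem:key}. Part (A) provides Zariski locally a lift $\tilde{f}\colon\Vect_n\to\lvert\cosk_k(\Gr_n(\bar{\Delta}^\bullet))\rvert$ of the map $\mcal{O}(k+1)\otimes p^*$, and part (B) identifies the composite $\Vect_n\to\lvert\Vect_n(\bar{\Delta}^\bullet)\rvert\to\gamma_*L_\mot\gamma^*\Vect_n$ with the canonical unit map after $k$-truncation. The bridge between the two is a natural transformation
\[
	\phi_Y\colon \lvert\cosk_k(Y(\bar{\Delta}^\bullet))\rvert \to \gamma_*L_\mot\gamma^*Y;
\]
post-composing $\tilde{f}$ with $\phi_{\Gr_n}$ and truncating will yield the desired lift.

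To construct $\phi_Y$, I use the key observation (already implicit in the right-hand column of the big diagram in the proof of Lemma \ref{lem:key}(B)) that if $E$ is a pbf-local sheaf with transfers, then $p^*\colon E\to\cofib\bigl(E(\bar{\Delta}^n_\infty)\xrightarrow{\iota_*}E(\bar{\Delta}^n)\bigr)$ is an equivalence for every $n$; consequently the semi-simplicial object $E(\bar{\Delta}^\bullet)/E(\bar{\Delta}^\bullet_\infty)$ is equivalent to the constant object at $E$ via $p^*$, and in particular $\lvert\cosk_k(E(\bar{\Delta}^\bullet)/E(\bar{\Delta}^\bullet_\infty))\rvert\simeq E$. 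Specializing to $E=\gamma_*L_\mot\gamma^*Y$ and pre-composing with the unit $Y\to\gamma_*L_\mot\gamma^*Y$, I define $\phi_Y$ as the composite
\[
	\lvert\cosk_k Y(\bar{\Delta}^\bullet)\rvert \to \lvert\cosk_k\bigl(\gamma_*L_\mot\gamma^*Y(\bar{\Delta}^\bullet)/\gamma_*L_\mot\gamma^*Y(\bar{\Delta}^\bullet_\infty)\bigr)\rvert \xleftarrow{\,\simeq\,} \gamma_*L_\mot\gamma^*Y,
\]
with the rightmost arrow inverted.

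Setting $\tilde{g}:=\phi_{\Gr_n}\circ\tilde{f}$ produces a Zariski-local map $\Vect_n\to\gamma_*L_\mot\gamma^*\Gr_n$; its $k$-truncation is the candidate lift in the corollary. To verify that the triangle commutes, I use naturality of $\phi$ in $Y$ together with Lemma \ref{lem:key}(A) to rewrite the composite $\Vect_n\xrightarrow{\tilde{g}}\gamma_*L_\mot\gamma^*\Gr_n\to\gamma_*L_\mot\gamma^*\Vect_n$ as $\phi_{\Vect_n}\circ(\mcal{O}(k+1)\otimes p^*)$; Lemma \ref{lem:key}(B) then identifies this with the canonical unit map after $k$-truncation.

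The main technical subtlety I foresee lies in this last step: one must check that $\phi_{\Vect_n}\circ(\mcal{O}(k+1)\otimes p^*)$ really agrees with the composite $\Vect_n\to\lvert\Vect_n(\bar{\Delta}^\bullet)\rvert\to\gamma_*L_\mot\gamma^*\Vect_n$ appearing in (B), so that (B) applies. This reduces to observing that the natural transformation $\lvert X\rvert\to\lvert\cosk_k X\rvert$ applied to $\Vect_n(\bar{\Delta}^\bullet)$, followed by the pbf-local equivalence, reproduces the Lemma \ref{lem:approx} factorization through the cofiber (for the already pbf-local $E=\gamma_*L_\mot\gamma^*\Vect_n$, for which $L_\mot E=E$). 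This compatibility is essentially the content of the right-hand part of the big commutative diagram in the proof of (B).
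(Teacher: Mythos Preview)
Your proposal is essentially the paper's own argument: the paper, too, composes the Zariski-local lift supplied by (A) with the natural passage
\[
\lvert\cosk_k(Y(\bar{\Delta}^\bullet))\rvert \longrightarrow \lvert\cosk_k(\gamma_*L_\mot\gamma^*Y)\rvert \longrightarrow (\gamma_*L_\mot\gamma^*Y)_{\le k}
\]
(this is exactly your $\phi_Y$ followed by truncation), and then invokes (B) to identify the resulting bottom composite with the canonical map.

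There is one small slip. Your displayed equivalence $\lvert\cosk_k(E(\bar{\Delta}^\bullet)/E(\bar{\Delta}^\bullet_\infty))\rvert\simeq E$ is not correct before truncation: for a constant \emph{semi}-simplicial object at $E$, the $k$-coskeleton in degree $m>k$ is $E^{\lvert\mathrm{sk}_k\Delta^m\rvert}$, not $E$, so its realization need not recover $E$. What is true, and what the paper uses, is the general fact $\lvert\cosk_kX\rvert_{\le k}\simeq\lvert X\rvert_{\le k}$; applied to the constant object this gives $(\gamma_*L_\mot\gamma^*Y)_{\le k}$ directly. Since you truncate at the end anyway, the fix is purely cosmetic: just let $\phi_Y$ land in $(\gamma_*L_\mot\gamma^*Y)_{\le k}$ rather than in $\gamma_*L_\mot\gamma^*Y$.
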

\begin{proof}
By Lemma \ref{lem:key}, we have a commutative diagram with a Zariski local lift as indicated
\[
\xymatrix{
	& &  \lvert\cosk_k(\Gr_n(\bar{\Delta}^\bullet))\rvert \ar[d] \ar[r] &
	\lvert \cosk_k(\gamma_*L_\mot\gamma^*\Gr_n) \rvert \ar[d] \ar[r] &
	(\gamma_*L_\mot\gamma^*\Gr_n)_{\le k} \ar[d] \\
	\Vect_n \ar@{.>}@/^1.5pc/[rru] \ar[rr]^-{\mcal{O}(k+1)\otimes p^*} \ar@/_1.5pc/[rrrr]_-{\mrm{can}} & &
	\lvert\cosk_k(\Vect_n(\bar{\Delta}^\bullet))\rvert \ar[r] & 
	\lvert \cosk_k(\gamma_*L_\mot\gamma^*\Vect_n) \rvert \ar[r] &
	(\gamma_*L_\mot\gamma^*\Vect_n)_{\le k},
}
\]
where the rightmost horizontal arrows come from the canonical map $\lvert\cosk_kX\rvert\to\lvert\cosk_kX\rvert_{\le k}\simeq\lvert X\rvert_{\le k}$ for a semi-simplicial space $X$. 
Hence, we get a desired lift.
\end{proof}

\begin{corollary}\label{cor:pic}
For every qcqs derived scheme $S$, the canonical map
\[
	L_\mot \gamma^*\mbb{P}^\infty_S \to L_\mot \gamma^*\Pic_S
\]
is an equivalence.
\end{corollary}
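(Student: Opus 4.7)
The plan is to combine Lemma \ref{lem:pic}, which provides a left inverse $\psi$ of the canonical map $\phi\colon L_\mot\gamma^*\mbb{P}^\infty\to L_\mot\gamma^*\Pic$, with Corollary \ref{cor:key} in the case $n=1$, which supplies, at each Postnikov level $k$, a Zariski local lift $s_k\colon\Pic\to(\gamma_*L_\mot\gamma^*\mbb{P}^\infty)_{\le k}$ of the truncated canonical map. As a preliminary step, reduce to $S=\Spec(\mbb{Z})$, using the compatibility of $L_\mot$, $\gamma^*$, and the formation of $\mbb{P}^\infty$ and $\Pic$ with base change.

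It remains to show that $\phi\psi\simeq\id$. The adjunction $\gamma^*\dashv\gamma_*$ together with the universal property of the motivic localization yields a natural equivalence
\[
	\Map_{\Shv^\tr_\pbf(\Sch_\mbb{Z})}(L_\mot\gamma^*\Pic,L_\mot\gamma^*\Pic)\simeq \Map_{\Shv(\Sch_\mbb{Z})}(\Pic,\gamma_*L_\mot\gamma^*\Pic),
\]
under which the identity corresponds to the canonical map $\mrm{can}$, while $\phi\psi$ corresponds to the composite $\gamma_*\phi\circ\gamma_*\psi\circ\mrm{can}$. Since $\gamma_*L_\mot\gamma^*\Pic$ lies in the hypercomplete $\infty$-topos $\Shv(\Sch_\mbb{Z})$, the equality of these two maps can be verified after applying the $k$-truncation $(-)_{\le k}$ for each $k\ge 0$.

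At truncation level $k$, Corollary \ref{cor:key} gives a Zariski local factorization $\mrm{can}_{\le k}\simeq(\gamma_*\phi)_{\le k}\circ s_k$. Combined with the identity $(\gamma_*\psi)_{\le k}\circ(\gamma_*\phi)_{\le k}\simeq\id$, which is obtained by applying $\tau_{\le k}$ to $\gamma_*\psi\circ\gamma_*\phi=\id$, one computes
\[
	(\gamma_*\phi)_{\le k}\circ(\gamma_*\psi)_{\le k}\circ\mrm{can}_{\le k}\simeq(\gamma_*\phi)_{\le k}\circ(\gamma_*\psi)_{\le k}\circ(\gamma_*\phi)_{\le k}\circ s_k\simeq(\gamma_*\phi)_{\le k}\circ s_k\simeq\mrm{can}_{\le k},
\]
which delivers the desired equality.

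The main obstacle is the bookkeeping in the second paragraph: one must carefully combine the adjunction $\gamma^*\dashv\gamma_*$ with the reflective localization $L_\mot$ to identify $\phi\psi$ with the composite map $\gamma_*\phi\circ\gamma_*\psi\circ\mrm{can}\colon\Pic\to\gamma_*L_\mot\gamma^*\Pic$ in $\Shv(\Sch_\mbb{Z})$. Once this translation is made, the argument reduces by hypercompleteness to the short formal truncation-wise computation using Corollary \ref{cor:key}.
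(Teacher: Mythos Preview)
Your approach is essentially the paper's: reduce to $\Spec(\mbb{Z})$, use Lemma~\ref{lem:pic} for a left inverse, and invoke Corollary~\ref{cor:key} at each truncation level together with Postnikov completeness of the Zariski $\infty$-topos. The paper phrases the second half slightly differently---it assembles the Zariski-local lifts into a single map $\Pic\to\lim_k L_\Zar(\gamma_*L_\mot\gamma^*\mbb{P}^\infty)_{\le k}\simeq\gamma_*L_\mot\gamma^*\mbb{P}^\infty$ and declares it a right inverse---whereas you argue directly that the given left inverse $\psi$ is also a right inverse by checking $\phi\psi\simeq\id$ after each truncation. These are two packagings of the same idea.

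Both your argument and the paper's share a subtle gap, which the paper itself flags in a footnote. The homotopy $\mrm{can}_{\le k}\simeq(\gamma_*\phi)_{\le k}\circ s_k$ you use depends on the non-canonical choice of the Zariski-local lift $s_k$ from Corollary~\ref{cor:key}, so the resulting homotopies $(\gamma_*\phi\psi)_{\le k}\circ\mrm{can}_{\le k}\simeq\mrm{can}_{\le k}$ are not a priori compatible as $k$ varies. Postnikov completeness gives $\Map(\Pic,\gamma_*L_\mot\gamma^*\Pic)\simeq\lim_k\Map(\Pic,\tau_{\le k}\gamma_*L_\mot\gamma^*\Pic)$, but two points of this limit that agree in each $\pi_0\Map(\Pic,\tau_{\le k}(-))$ need not agree in $\pi_0$ of the limit (there is a potential $\lim^1$ obstruction). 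Your sentence ``the equality of these two maps can be verified after applying the $k$-truncation $(-)_{\le k}$ for each $k\ge 0$'' is exactly where this is elided. The paper notes that this gap is repaired in the sequel~\cite{AI}; as written, neither argument is complete on its own.

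A minor secondary point: the lifts $s_k$ supplied by Corollary~\ref{cor:key} are only Zariski-local, so to treat them as morphisms of sheaves you should replace the pointwise truncation $(\gamma_*L_\mot\gamma^*\mbb{P}^\infty)_{\le k}$ by its Zariski sheafification $L_\Zar(\gamma_*L_\mot\gamma^*\mbb{P}^\infty)_{\le k}$, as the paper does.
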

\begin{proof}
We may assume $S=\Spec(\mbb{Z})$.
By Corollary \ref{cor:key}, we have a commutative diagram
\[
\xymatrix{
	& L_\Zar(\gamma_*L_\mot \gamma^*\mbb{P}^\infty)_{\le k} \ar[d] \\
	\Pic \ar[r] \ar@/^1pc/[ru]^-{\phi_k} & L_\Zar(\gamma_*L_\mot \gamma^*\Pic)_{\le k}.
}
\]
The map $\phi_k$ is characterized as a unique map which makes the diagram commutative since the right vertical map has a left inverse by Lemma \ref{lem:pic}.
In particular, these maps are assembled into a map
\[
	\phi \colon \Pic \to \lim_k L_\Zar(\gamma_*L_\mot\gamma^*\mbb{P}^\infty)_{\le k}.
\]
Since the $\infty$-topos $\Shv(X_\Zar)$ has finite homotopy dimension for each derived scheme $X\in\Sch_\mbb{Z}$, we have an equivalence $\lim_kL_\Zar(\gamma_*L_\mot\gamma^*\mbb{P}^\infty)_{\le k}\simeq\gamma_*L_\mot\gamma^*\mbb{P}^\infty$.
Therefore, we get a morphism $\Pic\to\gamma_*L_\mot\gamma^*\mbb{P}^\infty$ inducing a right inverse of the canonical map $L_\mot\gamma^*\mbb{P}^\infty_S\to L_\mot\gamma^*\Pic_S$.\footnote{It is not clear if the obtained map is indeed a right inverse, since $\phi_k$ above is unique only up to non-canonical homotopies. This gap has been fixed in \cite{AI}, and the result here remains true.}
Since the canonical map has a left inverse by Lemma \ref{lem:pic}, we conclude that it is an equivalence.
\end{proof}

\begin{corollary}\label{cor:pic_coh}
Let $E$ be a homotopy commutative algebra in $\SH^\tr_\pbf(\Sch_S)$.
Then there is a canonical ring isomorphism
\[
	\pi_*E(\Pic_S) \simeq \pi_*E(S)[[t]],
\]
where $t$ is the Euler class of the universal line bundle.
\end{corollary}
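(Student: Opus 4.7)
The plan is to reduce the computation of $\pi_*E(\Pic_S)$ to that of $\pi_*E(\mbb{P}^\infty_S)$ via Corollary \ref{cor:pic}, and then to combine the finite projective bundle formula (Corollary \ref{cor:pbf}) with a Milnor-sequence argument.

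First, by Corollary \ref{cor:pic} the canonical map $L_\mot\gamma^*\mbb{P}^\infty_S \to L_\mot\gamma^*\Pic_S$ is an equivalence. Since $E$ is a pbf-local sheaf of spectra with transfers, evaluating the mapping spectrum into $E$ turns this into an equivalence $E(\Pic_S) \simeq E(\mbb{P}^\infty_S)$. Under this equivalence the universal line bundle on $\Pic_S$ corresponds to $\mcal{O}(1)$ on $\mbb{P}^\infty_S$, so by naturality the Euler class $t$ of the universal line bundle matches the Euler class $\xi$ of $\mcal{O}(1)$.

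Second, writing $\mbb{P}^\infty_S = \colim_N \mbb{P}^N_S$, I would use the resulting identification
\[
	E(\mbb{P}^\infty_S) \simeq \lim_N E(\mbb{P}^N_S).
\]
By Corollary \ref{cor:pbf}, each $\pi_*E(\mbb{P}^N_S)$ is canonically isomorphic to $\pi_*E(S)[\xi]/\xi^{N+1}$ as a graded ring. Since the linear embedding $\mbb{P}^{N-1}\hookrightarrow\mbb{P}^N$ pulls $\mcal{O}(1)$ back to $\mcal{O}(1)$, naturality of Euler classes identifies the transition map $\pi_*E(\mbb{P}^N_S)\to\pi_*E(\mbb{P}^{N-1}_S)$ with the evident surjection sending $\xi$ to $\xi$. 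The inverse system is therefore Mittag--Leffler, the Milnor $\lim^1$ term vanishes, and the Milnor short exact sequence gives
\[
	\pi_*E(\mbb{P}^\infty_S) \simeq \lim_N \pi_*E(S)[\xi]/\xi^{N+1} \simeq \pi_*E(S)[[\xi]]
\]
as graded rings.

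Combining the two steps yields the canonical ring isomorphism $\pi_*E(\Pic_S) \simeq \pi_*E(S)[[t]]$. There is no serious obstacle here: the only subtle points are the compatibility of Euler classes under the equivalence of Corollary \ref{cor:pic} and the multiplicativity of the Milnor sequence, both of which are immediate from the formal properties of the constructions used so far.
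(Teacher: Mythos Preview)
Your argument is correct and is exactly the unpacking of the paper's one-line proof, which simply cites Corollary~\ref{cor:pbf} and Corollary~\ref{cor:pic}; the Mittag--Leffler passage from $\pi_*E(\mbb{P}^N_S)\simeq\pi_*E(S)[\xi]/\xi^{N+1}$ to $\pi_*E(\mbb{P}^\infty_S)\simeq\pi_*E(S)[[\xi]]$ is precisely what the paper leaves implicit in the word ``immediate''.
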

\begin{proof}
This is immediate from Corollary \ref{cor:pbf} and Corollary \ref{cor:pic}.
\end{proof}

\section{Chern classes and formal group laws}\label{chern}

\begin{definition}\label{def:chern}
Let $\mcal{E}$ be a vector bundle of rank $r\ge 1$ on a derived scheme $X\in\Sch_S$.
We let $c_0(\mcal{E})=1$.
For $1\le i\le r$, we define the \textit{$i$-th Chern class} $c_i(\mcal{E})\in\pi_0\PMGL_S(X)$ to be unique elements which satisfy the formula
\[
	\sum_{i=0}^r (-1)^i\xi^i\cdot p^*c_{r-i}(\mcal{E}) = 0
\]
in $\pi_0\PMGL_S(\mbb{P}(\mcal{E}))$, cf.\ Lemma \ref{lem:pbf}.
We write $c(\mcal{E}):=\sum_{i=0}^r c_i(\mcal{E})t^i$, which is the \textit{total Chern class}.
\end{definition}

\begin{remark}
The Chern classes are compatible with base changes since the Euler classes are.
If $\mcal{L}$ is a line bundle, then $c_1(\mcal{L})=e(\mcal{L})$.
\end{remark}

Let $m\colon\Pic\times\Pic\to\Pic$ be the map classifying the tensor products of line bundles.
Consider the induced map
\[
	m^*\colon \PMGL_S(\Pic_S) \to \PMGL_S(\Pic_S\times\Pic_S),
\]
and let $f_\univ$ be the image of $c_1(\mcal{L}_\univ)$ in
\[
	\pi_0\PMGL_S(\Pic_S\times\Pic_S) \simeq \pi_0\PMGL_S(S)[[x,y]],
\]
where the isomorphism is by Corollary \ref{cor:pic_coh}.
Then $f_\univ$ is a formal group law over $\pi_0\PMGL_S(S)$.
Moreover, the formal inverse power series of $t$ in $\pi_0\PMGL_S(S)[[t]]\simeq\pi_0\PMGL_S(\Pic_S)$ corresponds to the first Chern class of $\mcal{L}_\univ^{-1}$.
The next two lemmas are the standard properties of Chern classes and formal group laws.

\begin{lemma}\label{lem:fgl}
Suppose that $X\in\Sch_S$ is noetherian and admits an ample line bundle.
Then the first Chern classes of line bundles on $X$ are nilpotent in $\pi_0\PMGL_S(X)$ and we have an equality
\[
	c_1(\mcal{L}\otimes\mcal{L}')=f_\univ(c_1(\mcal{L}),c_1(\mcal{L}')) \in \pi_0\PMGL_S(X)
\]
for every pair of line bundles $\mcal{L},\mcal{L}'$ on $X$.
\end{lemma}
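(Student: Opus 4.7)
The plan is to reduce every assertion to pairs of very ample line bundles, where Chern classes are visibly nilpotent and the formal group law can be evaluated honestly by factoring through finite projective spaces. Since $X$ is noetherian with an ample line bundle $\mcal{L}_0$, any line bundle $\mcal{M}$ on $X$ admits a presentation $\mcal{M} \simeq \mcal{L}_1 \otimes \mcal{L}_2^{-1}$ with $\mcal{L}_1 = \mcal{M} \otimes \mcal{L}_0^{\otimes n}$ and $\mcal{L}_2 = \mcal{L}_0^{\otimes n}$ both very ample for $n \gg 0$; this is the main input from the hypotheses.

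For a very ample $\mcal{L}$, the associated closed $S$-immersion $j \colon X \hookrightarrow \mbb{P}^N_S$ satisfies $c_1(\mcal{L}) = j^*(\xi)$, which is nilpotent since $\xi^{N+1} = 0$ in $\pi_0\PMGL_S(\mbb{P}^N_S)$ by Corollary \ref{cor:pbf}. For a pair $(\mcal{L}_1,\mcal{L}_2)$ of very ample line bundles, the classifying map factors as $X \to \mbb{P}^{N_1}_S \times_S \mbb{P}^{N_2}_S \to \Pic_S \times \Pic_S$; pulling back the identity $m^*(c_1(\mcal{L}_\univ)) = f_\univ(x,y)$ through this chain, we first restrict $f_\univ$ to $\pi_0\PMGL_S(\mbb{P}^{N_1}_S \times_S \mbb{P}^{N_2}_S) \simeq \pi_0\PMGL_S(S)[x,y]/(x^{N_1+1},y^{N_2+1})$, where it becomes a genuine polynomial, and only then restrict to $X$, yielding $c_1(\mcal{L}_1 \otimes \mcal{L}_2) = f_\univ(c_1(\mcal{L}_1),c_1(\mcal{L}_2))$. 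An analogous argument with the inversion map $\Pic_S \to \Pic_S$ in place of $m$ gives $c_1(\mcal{L}^{-1}) = i(c_1(\mcal{L}))$ for very ample $\mcal{L}$, where $i(t)$ is the formal inverse power series.

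Combining these identities with the decomposition $\mcal{M} = \mcal{L}_1 \otimes \mcal{L}_2^{-1}$ expresses $c_1(\mcal{M})$ as a polynomial without constant term in the nilpotent elements $c_1(\mcal{L}_1)$ and $c_1(\mcal{L}_2)$, so $c_1(\mcal{M})$ is nilpotent; this disposes of the first claim. For the formula on arbitrary pairs $(\mcal{M},\mcal{M}')$, I would decompose $\mcal{M} = \mcal{L}_1 \otimes \mcal{L}_2^{-1}$ and $\mcal{M}' = \mcal{L}_3 \otimes \mcal{L}_4^{-1}$ with all $\mcal{L}_i$ very ample, rewrite $\mcal{M} \otimes \mcal{M}' \simeq (\mcal{L}_1 \otimes \mcal{L}_3) \otimes (\mcal{L}_2 \otimes \mcal{L}_4)^{-1}$, and apply the very ample formulas to each factor. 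Both $c_1(\mcal{M} \otimes \mcal{M}')$ and $f_\univ(c_1(\mcal{M}),c_1(\mcal{M}'))$ then unfold into nested expressions in $f_\univ$ and $i$ in the four variables $c_1(\mcal{L}_i)$, and their equality follows purely formally from commutativity, associativity, and the inverse axiom of the formal group law (the identity $(a-b)+(c-d) = (a+c)-(b+d)$ holds in any abelian group).

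The main obstacle is the subtle continuity issue in identifying, for an arbitrary classifying map $\mcal{M} \colon X \to \Pic_S$, the pullback $\mcal{M}^* \colon \pi_0\PMGL_S(\Pic_S) \to \pi_0\PMGL_S(X)$ with a naive evaluation $g(t) \mapsto g(c_1(\mcal{M}))$ on the power series presentation $\pi_0\PMGL_S(\Pic_S) \simeq \pi_0\PMGL_S(S)[[t]]$: the evaluation only makes sense once $c_1(\mcal{M})$ is known to be nilpotent, which is part of what we are proving. The very ample reduction is designed precisely to sidestep this, since in that case the classifying map factors through a bounded-dimensional projective space and the pullback is manifestly given by polynomial substitution in the truncated ring $\pi_0\PMGL_S(S)[t]/(t^{N+1})$.
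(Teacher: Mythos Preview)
Your proposal is correct and follows essentially the same strategy as the paper: decompose an arbitrary line bundle as $\mcal{L}_1\otimes\mcal{L}_2^{-1}$ with both factors globally generated (the paper) or very ample (you), factor the classifying map through a finite product of projective spaces so that the pullback from $\pi_0\PMGL_S(\Pic_S^k)$ lands in a truncated polynomial ring, and read off nilpotence and the formal group law identity there. The only cosmetic differences are that the paper obtains nilpotence for globally generated $\mcal{L}$ directly from the Euler class description (the sections have empty common vanishing) rather than by pulling back $\xi^{N+1}=0$, and that the paper's final sentence (``immediate from the construction'') packages what you spell out via the formal group law axioms; your explicit discussion of the continuity issue is in fact more careful than the paper's phrasing.
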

\begin{proof}
If $\mcal{L}$ is a globally generated line bundle, then it is generated by a finite number of global sections since $X$ is noetherian.
Then it is immediate from the definition of the Euler class that $e(\mcal{L})=c_1(\mcal{L})$ is nilpotent.
It follows that the map
\[
	\pi_0\PMGL_S(X)[[t]] \simeq \pi_0\PMGL(\Pic_S) \to \pi_0\PMGL_S(X)
\]
induced by the map $X\to\Pic$ classifying $\mcal{L}$ factors through $\pi_0\PMGL_S(X)[t]/t^m$ for some $m>0$.
In particular, $c_1(\mcal{L}^{-1})$, which is the image of the formal inverse of $t$, is also nilpotent.

Since $X$ admits an ample line bundle, every line bundle $\mcal{L}$ on $X$ can be written as $\mcal{L}_1\otimes\mcal{L}_2^{-1}$ for some globally generated line bundles $\mcal{L}_1$ and $\mcal{L}_2$.
Since we have seen that $c_1(\mcal{L}_1)$ and $c_1(\mcal{L}_2^{-1})$ are nilpotent, the map
\[
	\pi_0\PMGL_S(X)[[x,y]] \simeq \pi_0\PMGL(\Pic_S\times\Pic_S) \to \pi_0\PMGL_S(X)
\]
induced by the map $X\to\Pic\times\Pic$ classifying $(\mcal{L}_1,\mcal{L}_2^{-1})$ factors through $\pi_0\PMGL_S(X)[[x,y]]/(x,y)^m$ for some $m>0$.
Since $c_1(\mcal{L})$ is the image of $f_\univ$, it is nilpotent.
Then the last claim is an immediate consequence of the construction.
\end{proof}

\begin{lemma}\label{lem:chern}
Suppose that $X\in\Sch_S$ is noetherian and admits an ample line bundle. %\footnote{In fact, $X$ can be an arbitrary qcqs derived scheme, cf.\ \cite[Lemma 4.2.3]{AI}.}
Let $\mcal{E}$ be a vector bundle of rank $r\ge 1$ on $X$.
Then the following hold in $\pi_0\PMGL_S(X)$:
\begin{enumerate}[label=\upshape{(\roman*)},leftmargin=*]
\item $e(\mcal{E})=c_r(\mcal{E})$.
\item $c_i(\mcal{E})$ is nilpotent for each $i\ge 1$.
\item If $\mcal{E}$ admits a filtration
\[
	0=\mcal{E}_0 \subset \mcal{E}_1 \subset \dotsb \subset \mcal{E}_r=\mcal{E}
\]
such that $\mcal{L}_i=\mcal{E}_i/\mcal{E}_{i-1}$ is a line bundle for $1\le i\le r$, then
\[
	c(\mcal{E}) = \prod_{i=1}^r(1+c_1(\mcal{L}_i)t).
\]
\item If we have a fiber sequence
\[
	\mcal{E}'\to \mcal{E}\to \mcal{E}''
\]
of vector bundles on $X$, then $c(\mcal{E})=c(\mcal{E}')\cdot c(\mcal{E}'')$.
\end{enumerate}
\end{lemma}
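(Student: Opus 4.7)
I would prove the four claims in the order (iii), (ii), (i), (iv). The core is (iii), whence (ii) is immediate, while (i) and (iv) follow via the splitting principle: the full flag bundle $\pi\colon\mcal{F}l(\mcal{E})\to X$ is an iterated projective bundle, so $\pi^*$ is split injective on $\pi_0\PMGL_S$ by repeated use of Lemma \ref{lem:pbf}, and $\pi^*\mcal{E}$ acquires a full filtration by line quotients.

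For (iii), I induct on $r$. The case $r=1$ is the remark after Definition \ref{def:chern}. For $r\geq 2$, the subbundle $\mcal{E}_{r-1}\subset\mcal{E}$ with quotient $\mcal{L}_r$ gives a closed embedding $\sigma\colon X\hookrightarrow\mbb{P}(\mcal{E})$ with $\sigma^*\xi=c_1(\mcal{L}_r)$, while $U:=\mbb{P}(\mcal{E})\setminus\sigma(X)$ admits a natural map to $\mbb{P}(\mcal{E}_{r-1})$ along which $\xi$ pulls back to the hyperplane class on $\mbb{P}(\mcal{E}_{r-1})$. By the inductive hypothesis for $\mcal{E}_{r-1}$, the polynomial
\[
Q_{r-1}(\xi):=\sum_{j=0}^{r-1}(-1)^j\xi^j\, p^*e_{r-1-j}(c_1(\mcal{L}_1),\dots,c_1(\mcal{L}_{r-1}))
\]
restricts to zero on $U$. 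Hence $Q_{r-1}(\xi)$ is expected to be of the form $\sigma_*(a)$, and the projection formula (Lemma \ref{lem:pf}) combined with $\sigma^*(\xi-p^*c_1(\mcal{L}_r))=0$ then yields $(\xi-p^*c_1(\mcal{L}_r))\cdot Q_{r-1}(\xi)=0$. This is precisely the defining relation $\sum_{i=0}^r(-1)^i\xi^i\, p^*e_{r-i}(c_1(\mcal{L}_1),\dots,c_1(\mcal{L}_r))=0$, and uniqueness in Lemma \ref{lem:pbf} forces $c_i(\mcal{E})=e_i(c_1(\mcal{L}_1),\dots,c_1(\mcal{L}_r))$.

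Claim (ii) is then immediate since each $e_i$ for $i\geq 1$ has no constant term in the nilpotent elements $c_1(\mcal{L}_j)$ (Lemma \ref{lem:fgl}). For (i), I first establish multiplicativity of Euler classes for fiber sequences $\mcal{E}'\to\mcal{E}\to\mcal{E}''$: the induced cartesian square of total spaces
\[
\xymatrix{
\mbb{V}(\mcal{E}') \ar[r]^-{\bar{s}} \ar[d] & \mbb{V}(\mcal{E}) \ar[d] \\
X \ar[r]^-{s''} & \mbb{V}(\mcal{E}'')
}
\]
factors the zero section as $s=\bar{s}\circ s'$, and combining the projection formula with base-change compatibility of Euler classes yields $e(\mcal{E})=e(\mcal{E}')\cdot e(\mcal{E}'')$. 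Iterating on a filtered $\mcal{E}$ and applying (iii) gives $e(\mcal{E})=\prod c_1(\mcal{L}_i)=c_r(\mcal{E})$, which extends to arbitrary $\mcal{E}$ by the splitting principle. For (iv), the splitting principle applied jointly to $\mcal{E}'$ and $\mcal{E}''$ yields an iterated flag bundle $F\to X$ on which $\pi^*\mcal{E}'$, $\pi^*\mcal{E}''$, and hence $\pi^*\mcal{E}$, admit compatible complete filtrations, and (iii) combined with the ring identity $\prod(1+\alpha_it)\prod(1+\beta_jt)=\prod(1+\gamma_kt)$ gives the Whitney formula on $F$, descending to $X$ by split injectivity of $\pi^*$.

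The main obstacle is the step ``$Q_{r-1}(\xi)=\sigma_*(a)$'' in the induction for (iii). In an $\mbb{A}^1$-local setting this is immediate from the fact that $U\to\mbb{P}(\mcal{E}_{r-1})$ is an affine-line bundle, but absent $\mbb{A}^1$-invariance one must proceed differently. A natural candidate is to identify $Q_{r-1}(\xi)$ via Lemma \ref{lem:euler} with a Gysin class, specifically $\sigma_*(1)=e((p^*\mcal{E}_{r-1})^\vee\otimes\mcal{O}(1))$ (the canonical section of this bundle vanishing precisely on $\sigma(X)$); the mismatch between the formal-group-law expression of this Euler class and the ``ordinary difference'' form $\prod(\xi-p^*c_1(\mcal{L}_j))$ must then be reconciled via the nilpotence of Lemma \ref{lem:fgl} and a Weierstrass-preparation-style argument, perhaps reducing to a universal situation using Corollary \ref{cor:pic_coh}.
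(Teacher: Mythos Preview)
Your outline for (i), (ii), (iv) is sound and close in spirit to the paper's, but the inductive argument for (iii) has a real gap that your own final paragraph correctly diagnoses: the implication ``$Q_{r-1}(\xi)$ vanishes on $U$, hence lies in the image of $\sigma_*$'' would need both $\mbb{A}^1$-invariance (to identify cohomology of $U$ with that of $\mbb{P}(\mcal{E}_{r-1})$ along the affine-bundle projection) and a localization sequence, neither of which is available in this setting. Your proposed fix---identifying $\sigma_*(1)$ with $e\bigl((p^*\mcal{E}_{r-1})^\vee(1)\bigr)$ and reconciling with $\prod_j(\xi-p^*c_1(\mcal{L}_j))$ via the formal group law---is on the right track, but to compute that Euler class as a product of line-bundle Euler classes you would already need the multiplicativity of $e$ you establish for (i), so the dependencies become tangled; and once reorganized, the induction itself becomes superfluous.

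The paper bypasses all of this with a device you are missing: a $\mbb{P}^1$-deformation replacing extensions by direct sums. Given a fiber sequence $\mcal{E}'\xrightarrow{f}\mcal{E}\to\mcal{E}''$, set $\tilde{\mcal{E}}:=\cofib\bigl((\mrm{id}\otimes t_0)\oplus(f\otimes t_1)\colon\mcal{E}'\to\mcal{E}'(1)\oplus\mcal{E}(1)\bigr)$ on $\mbb{P}^1_X$; then $i_0^*\tilde{\mcal{E}}\simeq\mcal{E}'\oplus\mcal{E}''$ and $i_\infty^*\tilde{\mcal{E}}\simeq\mcal{E}$, and since $i_0^*=i_\infty^*$ on any pbf-local sheaf one obtains $c(\mcal{E})=c(\mcal{E}'\oplus\mcal{E}'')$ and $e(\mcal{E})=e(\mcal{E}'\oplus\mcal{E}'')$ in one stroke. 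This reduces (iii) to the split case $\mcal{E}=\bigoplus_i\mcal{L}_i$, where the natural sections of $\mcal{L}_i^{-1}(1)$ on $\mbb{P}(\mcal{E})$ have empty common zero locus, so $\prod_i c_1(\mcal{L}_i^{-1}(1))=0$; the formal group law then shows each $\xi-p^*c_1(\mcal{L}_i)$ is a unit multiple of $c_1(\mcal{L}_i^{-1}(1))$ (precisely your ``Weierstrass preparation'' intuition), giving $\prod_i(\xi-p^*c_1(\mcal{L}_i))=0$ directly with no induction. The same deformation handles (i) by reducing to $e\bigl(\bigoplus\mcal{L}_i\bigr)=\prod e(\mcal{L}_i)$, which follows from $\Sym\bigl(\bigoplus\mcal{L}_i^\vee\bigr)=\bigotimes\Sym(\mcal{L}_i^\vee)$.
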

\begin{proof}
Firstly, (iv) follows from (iii) by taking the pullback of $\mcal{E}$ to the derived scheme representing full flags of $\mcal{E}$.
Similarly, (ii) follows from (iii) and Lemma \ref{lem:fgl}.
We need a splitting trick for the other assertions.
Suppose we are given a fiber sequence
\[
	\mcal{E}'\xrightarrow{f} \mcal{E}\to \mcal{E}''
\]
of vector bundles on $X$.
Let $t_0,t_1$ be homogeneous coordinates of $\mbb{P}^1$, and let $\tilde{\mcal{E}}$ be the cofiber of the map
\[
	(\mrm{id}\otimes t_0)\oplus(f\otimes t_1) \colon \mcal{E}' \to \mcal{E}'(1)\oplus \mcal{E}(1)
\]
of vector bundles on $\mbb{P}^1\times X$.
Then $i_0^*\tilde{\mcal{E}}=\mcal{E}'\oplus\mcal{E}''$ and $i_\infty^*\tilde{\mcal{E}}=\mcal{E}$, and thus we get $c(\mcal{E})=c(\mcal{E}'\oplus\mcal{E}'')$ and $e(\mcal{E})=e(\mcal{E}'\oplus\mcal{E}'')$.

(i) It is obvious for line bundles.
For the general case, we may assume that $\mcal{E}$ is a direct sum of line bundles $\bigoplus_{i=1}^r\mcal{L}_i$ by the splitting trick.
Since $\Sym(\bigoplus\mcal{L}_i)=\bigotimes_i\Sym(\mcal{L}_i)$, we have $e(\mcal{E})=\prod_ie(\mcal{L}_i)$, and thus the assertion follows from (iii).

(iii) We may assume that $\mcal{E}=\bigoplus_{i=1}^r\mcal{L}_i$.
Consider the universal quotient $\mcal{E}\to\mcal{O}(1)$ on $\mbb{P}(\mcal{E})$.
The induced map $\mcal{L}_i\to\mcal{O}(1)$ gives a global section $s_i$ of $\mcal{L}_i^{-1}(1)$, and let $D_i\subset\mbb{P}(\mcal{E})$ be the derived vanishing locus of $s_i$.
Then the intersection of all $D_i$ for $1\le i\le r$ is empty, from which it follows that $\prod_ic_1(\mcal{L}_i^{-1}(1))=0$.
By the formal group law (Lemma \ref{lem:fgl}),
\[
	\xi = c_1(\mcal{L}_i\otimes\mcal{L}_i^{-1}(1))
		= c_1(\mcal{L}_i)+c_1(\mcal{L}_i^{-1}(1)) + \sum_{p,q\ge 1}a_{pq}c_1(\mcal{L}_i)^pc_1(\mcal{L}_i^{-1}(1))^q
\]
for some $a_{pq}\in\pi_0\PMGL_S(S)$, and therefore $\prod_i(\xi-c_1(\mcal{L}_i))=0$.
This implies the desired equation.
\end{proof}

We can compute the cohomology of grassmannians in terms of Chern classes by using Lemma \ref{lem:chern}.

\begin{lemma}\label{lem:grass}
Let $E$ be a pbf-local sheaf with transfers on $\Sch_S$ and $n\ge 0$.
Let $\mcal{E}$ be a vector bundle of rank $r\ge 1$ on a derived scheme $X\in\Sch_S$ and $\mcal{Q}$ the universal quotient bundle of $\mcal{E}$ on $\Gr_n(\mcal{E})$.
Assume that $X=S\times_{S'}X'$ for some noetherian derived scheme $X'\in\Sch_{S'}$ admitting an ample line bundle and that $\mcal{E}=\mcal{E}'_X$ for some vector bundle $\mcal{E}'$ of rank $r$ on $X'$. %\footnote{In fact, this assumption is unnecessary, cf.\ \cite[Lemma 4.2.4]{AI}.}
Then the morphism
\[
	\sum_\alpha (c(\mcal{Q})^\alpha \cdot p^*) \colon \bigoplus_\alpha E(X) \to E(\Gr_n(\mcal{E}))
\]
is an equivalence, where $\alpha$ runs over all $n$-tuples of non-negative integers with $|\alpha|\le r-n$ and $c^\alpha:=\prod c_i^{\alpha_i}$.
\end{lemma}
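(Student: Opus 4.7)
We argue by induction on $n$. The base case $n = 1$ is Lemma \ref{lem:pbf}: since $\Gr_1(\mcal{E}) = \mbb{P}(\mcal{E})$ with universal quotient $\mcal{Q} = \mcal{O}(1)$, we have $c_1(\mcal{Q}) = \xi$, and the claim is exactly the projective bundle formula.

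For the inductive step, the plan is to compute the $E$-cohomology of the projective bundle $\pi \colon \mbb{P}(\mcal{Q}) \to \Gr_n(\mcal{E})$ in two different ways. Write $0 \to \mcal{K} \to \mcal{Q} \to \mcal{O}(1) \to 0$ for the tautological sequence on $\mbb{P}(\mcal{Q})$ (so $\mcal{K}$ has rank $n-1$), and let $\mcal{U}$ denote the universal rank-$(r-1)$ kernel on $\mbb{P}(\mcal{E})$. The geometric input is the canonical identification
\[
\mbb{P}(\mcal{Q}) \;\cong\; \Gr_{n-1}(\mcal{U})
\]
over $\mbb{P}(\mcal{E})$: a point of $\mbb{P}(\mcal{Q})$ over $X$ is a rank-$n$ quotient $\mcal{E} \twoheadrightarrow \mcal{Q}$ followed by a line quotient $\mcal{Q} \twoheadrightarrow \mcal{L}$; equivalently, a line quotient $\mcal{E} \twoheadrightarrow \mcal{L}$ (a point of $\mbb{P}(\mcal{E})$) together with a rank-$(n-1)$ quotient of $\mcal{U} = \ker(\mcal{E} \to \mcal{L})$. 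Under this identification, $\mcal{K}$ corresponds to the universal rank-$(n-1)$ quotient on $\Gr_{n-1}(\mcal{U})$, and $c_1(\mcal{O}(1))$ on $\mbb{P}(\mcal{Q})$ agrees with the pullback of $c_1(\mcal{O}(1))$ from $\mbb{P}(\mcal{E})$; denote both by $\zeta$.

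Since $\mbb{P}(\mcal{E}) = S \times_{S'} \mbb{P}(\mcal{E}')$ still satisfies the standing hypothesis and $\mcal{U}$ is pulled back from the corresponding kernel on $\mbb{P}(\mcal{E}')$, the induction hypothesis applies to $\Gr_{n-1}(\mcal{U}) \to \mbb{P}(\mcal{E})$. Combined with Lemma \ref{lem:pbf} for $\mbb{P}(\mcal{E}) \to X$, it produces an equivalence
\[
\Psi \colon \bigoplus_{j, \beta} E(X) \xrightarrow{\;\sim\;} E(\mbb{P}(\mcal{Q})), \qquad (j, \beta) \mapsto \zeta^j\, c(\mcal{K})^\beta,
\]
with $0 \le j \le r-1$ and $\beta$ an $(n-1)$-tuple satisfying $|\beta| \le r-n$. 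On the other hand, Lemma \ref{lem:pbf} applied to $\pi$ reduces the lemma to the assertion that
\[
\Phi \colon \bigoplus_{i, \alpha} E(X) \to E(\mbb{P}(\mcal{Q})), \qquad (i, \alpha) \mapsto \zeta^i\, c(\mcal{Q})^\alpha,
\]
is an equivalence, with $0 \le i \le n-1$ and $\alpha$ an $n$-tuple satisfying $|\alpha| \le r-n$. By Lemma \ref{lem:chern}(iv) applied to $\mcal{K} \to \mcal{Q} \to \mcal{O}(1)$, $c(\mcal{Q}) = c(\mcal{K})(1 + \zeta t)$ with $c_n(\mcal{K}) = 0$; expanding $c(\mcal{Q})^\alpha$ by the binomial theorem factors $\Phi = \Psi \circ T$ for a universal (integer-valued) change-of-basis matrix $T$ whose source and target are free $E(X)$-modules of equal rank, since $n \binom{r}{n} = r \binom{r-1}{n-1}$.

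The main obstacle is the remaining combinatorial verification that $T$ is invertible over $\mbb{Z}$. This amounts to a standard Schubert-calculus-type computation: using the relation $\sum_{k=0}^n (-1)^k \zeta^k c_{n-k}(\mcal{Q}) = 0$ on $\mbb{P}(\mcal{Q})$ (which expresses the vanishing $c_n(\mcal{K}) = 0$ after substitution) together with the inverse formula $c_i(\mcal{K}) = \sum_{k=0}^i (-\zeta)^k c_{i-k}(\mcal{Q})$, one writes every $\Psi$-basis element $\zeta^j c(\mcal{K})^\beta$ as a $\mbb{Z}$-linear combination of $\Phi$-basis elements with the required bounds $i \le n-1$ and $|\alpha| \le r-n$, thereby exhibiting $T^{-1}$ explicitly. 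Once $T$ is known to be invertible, $\Phi$ is an equivalence and the inductive step is complete.
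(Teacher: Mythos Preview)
Your argument is correct and follows essentially the same route as the paper: induction on $n$, the identification $\mbb{P}(\mcal{Q})\cong\Gr_{n-1}$ of the tautological kernel on $\mbb{P}(\mcal{E})$, the Whitney formula from Lemma~\ref{lem:chern}(iv) relating $c(\mcal{Q})$ and $c(\mcal{K})$, and a final appeal to the classical integer change-of-basis computation. The only cosmetic differences are that the paper starts the induction at $n=0$ rather than $n=1$, and is slightly terser in combining the inductive hypothesis with Lemma~\ref{lem:pbf} for $\mbb{P}(\mcal{E})\to X$.
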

\begin{proof}
We prove by induction on $n$.
The case $n=0$ is obvious, and let $n\ge 1$.
By Lemma \ref{lem:pbf}, the map
\[
	\sum_i \xi^i\colon \bigoplus_{i=0}^{n-1} E(\Gr_n(\mcal{E})) \to E(\mbb{P}(\mcal{Q}))
\]
is an equivalence.
Hence, it suffices to show that the composite
\[
	\bigoplus_i\bigoplus_\alpha E(X) \xrightarrow{c(\mcal{Q})^\alpha} \bigoplus_i E(\Gr_n(\mcal{E})) \xrightarrow{\xi^i} E(\mbb{P}(\mcal{Q}))
\]
is an equivalence.
Note that $\mbb{P}(\mcal{Q})$ is isomorphic to the grassmannian $\Gr_{n-1}(\mcal{E}')$ of the universal subbundle $\mcal{E}'$ of $\mcal{E}$ on $\mbb{P}(\mcal{E})$.
Therefore, by the induction hypothesis, the map
\[
	\sum_{j,\beta} c(\mcal{Q}')^\beta\xi^j \colon
	\bigoplus_\beta\bigoplus_{j=0}^{r-1} E(X) \to E(\mbb{P}(\mcal{Q}))
\]
is an equivalence, where $\mcal{Q}'$ is the universal subbundle of $\mcal{Q}$ on $\mbb{P}(\mcal{Q})$ and $\beta$ runs over all $(n-1)$-tuples of non-negative integers with $|\beta|\le r-n$.
By Lemma \ref{lem:chern} (iv), we have $c_k(\mcal{Q})=c_k(\mcal{Q}')+c_{k-1}(\mcal{Q}')\xi$.
Therefore, it remains to show that the family
\[
	\Bigl\{ \xi^i\textstyle\prod_k(c_k(\mcal{Q}')+c_{k-1}(\mcal{Q}')\xi)^{\alpha_k} \Bigr\}_{0\le i\le n-1,\alpha}
\]
forms the same linear system as $\{c(\mcal{Q}')^\beta\xi^j\}_{0\le j\le r-1,\beta}$, and this is elementary; or we can refer to the classical computation of cohomology of grassmannians (as in \cite[\S14.6]{Ful}, for example), since it gives a desired invertible matrix over $\mbb{Z}$.
\end{proof}

\begin{corollary}\label{cor:grass}
Let $E$ be a homotopy commutative algebra in $\SH^\tr_\pbf(\Sch_S)$.
Then there is a canonical ring isomorphism
\[
	\pi_*E(\Gr_{n,S}) \simeq \pi_*E(S)[[c_1,\dotsc,c_n]],
\]
where $c_i$ is the $i$-th Chern class of the universal quotient bundle on $\Gr_{n,S}$.
\end{corollary}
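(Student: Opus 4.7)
The plan is to reduce the infinite-dimensional computation to Lemma \ref{lem:grass} by writing the infinite grassmannian as a filtered colimit of finite ones. First I would use $\Gr_{n,S}=\colim_N\Gr_n(\mcal{O}^N)_S$. Since $\Sigma^\infty_+$ is a left adjoint, it commutes with this colimit, and mapping into $E$ converts the colimit into a limit, giving $E(\Gr_{n,S})\simeq\lim_N E(\Gr_n(\mcal{O}^N)_S)$ as spectra.

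Next I would apply Lemma \ref{lem:grass} at each finite stage with $S'=X'=\Spec(\mbb{Z})$, which is noetherian and affine (hence trivially admits an ample line bundle), and $\mcal{E}'=\mcal{O}^N$, so that $X=S$ and $\mcal{E}=\mcal{O}^N_S$. This exhibits $E(\Gr_n(\mcal{O}^N)_S)$ as a free $\pi_*E(S)$-module on the monomials $\{c(\mcal{Q}_N)^\alpha : |\alpha|\le N-n\}$ in the Chern classes of the universal quotient bundle $\mcal{Q}_N$.

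I would then analyze the tower. The closed immersion $\Gr_n(\mcal{O}^N)\hookrightarrow\Gr_n(\mcal{O}^{N+1})$ pulls $\mcal{Q}_{N+1}$ back to $\mcal{Q}_N$, and Chern classes commute with pullback since the Euler classes from which they are constructed do. Hence the transition map sends $c(\mcal{Q}_{N+1})^\alpha$ to $c(\mcal{Q}_N)^\alpha$, a basis element when $|\alpha|\le N-n$ and a specific linear combination of lower monomials otherwise. In particular each transition is surjective on homotopy groups, so the Milnor $\lim^1$ terms vanish and
\[
	\pi_k E(\Gr_{n,S}) \;\cong\; \lim_N \bigoplus_{|\alpha|\le N-n}\pi_k E(S).
\]

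The remaining step, and the main subtlety, is to identify this limit with $\pi_kE(S)[[c_1,\dotsc,c_n]]$. Since the Chern classes lie in degree zero, the rank of the finite summands inside each fixed $\pi_k$ grows without bound, and for every multi-index $\alpha$ the monomial $c^\alpha$ is eventually a basis vector stable under all further transitions. A direct universal-property argument then identifies the inverse limit with the product $\prod_\alpha\pi_kE(S)=\pi_kE(S)[[c_1,\dotsc,c_n]]$; here one must carefully distinguish the polynomial ring appearing at each finite $N$ from its completion appearing in the limit. Finally, since each transition map is a ring map (being a pullback) and the bijection sends the formal variable $c_i$ to $c_i(\mcal{Q})$, the module isomorphism is in fact a ring isomorphism, yielding the canonical identification claimed.
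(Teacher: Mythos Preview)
Your reduction to finite Grassmannians via Lemma~\ref{lem:grass}, the analysis of the transition maps, and the Mittag--Leffler argument eliminating $\lim^1$ are all correct and parallel the paper. The gap is in the final identification of $\lim_N\pi_kE(\Gr_n(\mcal{O}^N_S))$ with $\pi_kE(S)[[c_1,\dotsc,c_n]]$. The transition maps are \emph{not} the projections discarding top-degree monomials: for instance in $\pi_*E(\Gr_2(\mcal{O}^3_S))$, whose basis is $\{1,c_1,c_2\}$, the Whitney formula (Lemma~\ref{lem:chern}(iv)) for $0\to\mcal{S}\to\mcal{O}^3\to\mcal{Q}\to 0$ gives $c_1^2=c_2$, so restriction from $\Gr_2(\mcal{O}^4)$ sends the basis vector $c_1^2$ to the basis vector $c_2$, not to zero. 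Your ``eventually stable basis'' observation can be made to produce an $R$-module bijection between the inverse limit and $\prod_\alpha R$, but that bijection is not a ring homomorphism (under it the element $c_1^2$ of the limit is sent to $c_1^2+c_2$, not to $c_1^2$), so the last sentence upgrading the module isomorphism to a ring isomorphism sending $c_i\mapsto c_i(\mcal{Q})$ does not follow.

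What is missing is the nilpotence of the $c_i(\mcal{Q}_N)$, namely Lemma~\ref{lem:chern}(ii) applied on $\Gr_n(\mcal{O}^N_\mbb{Z})$ and pulled back to $S$. Nilpotence is exactly what lets the surjection $\pi_*E(S)[c_1,\dotsc,c_n]\twoheadrightarrow\pi_*E(\Gr_n(\mcal{O}^N_S))$ factor through the power series ring, producing a compatible family of ring maps and hence a ring homomorphism $\pi_*E(S)[[c_1,\dotsc,c_n]]\to\lim_N\pi_*E(\Gr_n(\mcal{O}^N_S))$. The paper then checks this map is injective (from $\bigcap_N I_N=0$) and surjective via a short $(c_1,\dotsc,c_n)$-adic separatedness argument on the limit. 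Without invoking nilpotence there is no ring map from the power series ring into the tower at all, and the identification cannot be completed.
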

\begin{proof}
By Lemma \ref{lem:grass}, the map
\[
	\pi_*E(S)[c_1,\dotsc,c_n] \to \pi_*E(\Gr_n(\mcal{O}_S^N))
\]
sending $c_i$ to the $i$-th Chern class of the quotient bundle $\mcal{Q}$ is surjective, and if we let $I_N$ be its kernel then $\bigcap_NI_N=\varnothing$.
Since $\Gr_n(\mcal{O}^N_S)$ is a base change of $\Gr_n(\mcal{O}^N_\mbb{Z})$, Lemma \ref{lem:chern} (ii) implies that $c_i(\mcal{Q})\in\pi_*E(\Gr_n(\mcal{O}^N_S))$ is nilpotent for each $i\ge 1$.
Hence, $\pi_*E(\Gr_n(\mcal{O}_S^N))$ is complete for the $(c_1,\dotsc,c_n)$-adic topology and the above map factors through $\pi_*E(S)[[c_1,\dotsc,c_n]]$.
Taking limits with respect $N$, we obtain a map
\[
	\pi_*E(S)[[c_1,\dotsc,c_n]] \to \lim_N\pi_*E(\Gr_n(\mcal{O}_S^N)) \simeq \pi_*E(\Gr_{n,S}),
\]
which is injective since $\bigcap_NI_N=\varnothing$ and induces the identity modulo $(c_1,\dotsc,c_n)$.
Note that $\pi_*E(\Gr_{n,S})$ is separated for the $(c_1,\dotsc,c_n)$-adic topology since we have an injection
\[
	(c_1,\dotsc,c_n)^m \lim_N\pi_*E(\Gr_n(\mcal{O}_S^N)) \to \lim_N (c_1,\dotsc,c_n)^m\pi_*E(\Gr_n(\mcal{O}_S^N))
\]
and the right hand side is zero when $m\to\infty$.
Then the result follows from the following observation:
if $A$ is an $I$-adically complete ring and $B$ is an extension ring of $A$ such that $A/IA=B/IB$ and that $B$ is separated for the $I$-adic topology, then $A=B$.
Indeed, $B=A+IB=A+I(A+IB)=\dotsb$, and thus the separatedness implies the claim.
\end{proof}

Now we can complete the proof of the main theorem.

\begin{proof}[Proof of Theorem \ref{thm:vect}]
We may assume that the base scheme is $S=\Spec(\mbb{Z})$.
We first prove that the canonical map
\[
	L_\mot\gamma^*\Gr_n \to L_\mot\gamma^*\Vect_n
\]
admits a left inverse.
Let $\mcal{E}_\univ$ be the universal vector bundle of rank $n$ over $\Vect_n$.
Then the projective space $\mbb{P}(\mcal{E}_\univ)$ is defined as the stack classifying quotients $\mcal{E}_\univ\twoheadrightarrow\mcal{L}$ with $\mcal{L}$ being a line bundle.
Then the Euler class $\xi$ of the universal quotient bundle on $\mbb{P}(\mcal{E}_\univ)$ is defined as in Lemma \ref{lem:pic} and the map
\[
	\sum_{i=0}^{r-1}(\xi^i\cdot p^*)\colon \bigoplus_{i=0}^{r-1}E(\Vect_n) \to E(\mbb{P}(\mcal{E}_\univ))
\]
is an isomorphism for any pbf-local sheaf $E$ with transfers by Lemma \ref{lem:pbf}.
Hence, we can define the Chern classes $c_i$ of $\mcal{E}_\univ$ by the formula in Definition \ref{def:chern}.
Then we get a commutative diagram
\[
\xymatrix{
	\prod_\alpha (L_\mot\gamma^*\Gr_n)(\Spec(\mbb{Z})) \ar[d]_{\sum(c(\mcal{E}_\univ)^\alpha\cdot p^*)} \ar[rd]^\simeq & \\ 
	(L_\mot\gamma^*\Gr_n)(\Vect_n) \ar[r] &
	(L_\mot\gamma^*\Gr_n)(\Gr_n)
}
\]
and the diagonal arrow is an equivalence by Lemma \ref{lem:grass}.
Hence, the canonical map $\gamma^*\Gr_n\to L_\mot\gamma^*\Gr_n$ lifts to a map $\gamma^*\Vect_n\to L_\mot\gamma^*\Gr_n$, which gives a desired left inverse.

The rest of the proof is identical to that of Corollary \ref{cor:pic}.
By Corollary \ref{cor:key}, we have a commutative diagram
\[
\xymatrix{
	& L_\Zar(\gamma_*L_\mot\gamma^*\Gr_n)_{\le k} \ar[d] \\
	\Vect_n \ar[r] \ar@/^1pc/[ru]^-{\phi_k} & L_\Zar(\gamma_*L_\mot\gamma^*\Vect_n)_{\le k}.
}
\]
The map $\phi_k$ is characterized as a unique map which makes the diagram commutative since the right vertical map has a left inverse.
In particular, these maps are assembled into a map
\[
	\phi \colon \Vect_n \to \lim_k L_\Zar(\gamma_*L_\mot\gamma^*\Gr_n)_{\le k} \simeq \gamma_*L_\mot\gamma^*\Gr_n,
\]
which induces a right inverse of the canonical map $L_\mot\gamma^*\Gr_n\to L_\mot\gamma^*\Vect_n$.\footnote{As in the proof of Corollary \ref{cor:pic}, it is not clear if the induced map is indeed a right inverse. This gap has been fixed in \cite{AI}, and the result here remains true.}
Since we have seen that the canonical map has a left inverse, we conclude that it is an equivalence.
\end{proof}

\begin{corollary}\label{cor:vect}
Let $E$ be a homotopy commutative algebra in $\SH^\tr_\pbf(\Sch_S)$.
Then there is a canonical ring isomorphism
\[
	\pi_*E(\Vect_{n,S}) \simeq \pi_*E(S)[[c_1,\dotsc,c_n]],
\]
where $c_i$ is the $i$-th Chern class of the universal vector bundle of rank $n$.
\end{corollary}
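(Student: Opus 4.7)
The plan is to deduce the corollary immediately from Theorem \ref{thm:vect} combined with Corollary \ref{cor:grass}, checking only that the cited Chern classes on $\Vect_{n,S}$ and $\Gr_{n,S}$ are identified under the resulting comparison.

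First I would note that, since $E$ is an algebra object in $\SH^\tr_\pbf(\Sch_S)$, the functor $\mcal{F}\mapsto\pi_*E(\mcal{F})$ on Zariski sheaves factors through the localization $L_\mot\gamma^*$. Theorem \ref{thm:vect} asserts that the canonical map
\[
	L_\mot\gamma^*\Gr_{n,S} \to L_\mot\gamma^*\Vect_{n,S}
\]
is an equivalence, so pullback along the structure map $\Gr_{n,S}\to\Vect_{n,S}$ induces a ring isomorphism
\[
	\pi_*E(\Vect_{n,S}) \xrightarrow{\sim} \pi_*E(\Gr_{n,S}).
\]

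Next I would apply Corollary \ref{cor:grass}, which identifies the target as $\pi_*E(S)[[c_1,\dotsc,c_n]]$, where the $c_i$ are the Chern classes of the universal quotient bundle $\mcal{Q}$ on $\Gr_{n,S}$. Composing these two isomorphisms produces the candidate ring isomorphism.

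To finish, I would verify that this isomorphism sends the $i$-th Chern class of the universal rank-$n$ bundle $\mcal{E}_\univ$ on $\Vect_{n,S}$ to $c_i$. The map $\Gr_{n,S}\to\Vect_{n,S}$ sends a surjection $\mcal{O}^\infty\twoheadrightarrow\mcal{E}$ to $\mcal{E}$, so $\mcal{E}_\univ$ pulls back to $\mcal{Q}$. By the naturality of Euler classes noted after Definition \ref{def:euler} and hence of Chern classes (Definition \ref{def:chern}), the pullback of $c_i(\mcal{E}_\univ)$ is $c_i(\mcal{Q})$. This gives the asserted description. The main (very mild) obstacle is bookkeeping: one must check that the leftward-facing identification constructed in the proof of Theorem \ref{thm:vect} is compatible with the Chern-class formulas used in Lemma \ref{lem:grass}, but since both identifications are produced from the same Euler-class/pbf-formalism, naturality of $c_i$ under pullback suffices.
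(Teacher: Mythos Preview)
Your proposal is correct and follows exactly the paper's approach: the paper's proof is the single line ``This follows from Theorem \ref{thm:vect} and Corollary \ref{cor:grass},'' and you have simply unpacked this, together with the (straightforward) compatibility of Chern classes under the pullback $\Gr_{n,S}\to\Vect_{n,S}$.
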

\begin{proof}
This follows from Theorem \ref{thm:vect} and Corollary \ref{cor:grass}.
\end{proof}

\section{Representability of localizing invariants}\label{loc}

Examples of pbf-local sheaves of spectra with transfers are supplied by localizing invariants.
Let $\Cat^\ex_\infty$ be the $\infty$-category of small stable $\infty$-categories and exact functors.
A \textit{localizing invariant} is a functor $\Cat^\ex_\infty\to\Sp$ which carries exact sequences in $\Cat^\ex_\infty$ to fiber sequences of spectra, cf.\ \cite[Definition 1.2]{LT}.

Let $\Sch$ be the $\infty$-category of all qcqs derived schemes and suppose that it is marked with respect to finite quasi-smooth morphisms in the sense of \cite[Definition 2.6.1]{Mac}.
Then the functor $\Perf\colon\Sch^\op\to\Cat_\infty^\ex$ of perfect complexes is bivariant in the sense of \cite[\S3.2]{Mac} by \cite[Corollary 3.2.3.3, Theorem 6.1.3.2]{SAG}.
Hence, by \cite[Theorem 3.8.1]{Mac}, the functor $\Perf$ extends to a functor
\[
	\Perf^\dagger \colon \Corr^\fqsm(\Sch)^\op \to \Cat_\infty^\ex.
\]

\begin{lemma}\label{lem:inv}
If $E$ is a localizing invariant and $S$ a qcqs derived scheme, then the composite
\[
	\Corr^\fqsm(\Sch_S)^\op \to \Corr^\fqsm(\Sch)^\op \xrightarrow{\Perf^\dagger} \Cat_\infty^\ex \xrightarrow{E} \Sp
\]
is a pbf-local sheaf of spectra with transfers on $\Sch_S$.
\end{lemma}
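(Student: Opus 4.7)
The plan is to verify the three defining conditions of a pbf-local sheaf of spectra with transfers on the composite $E_\Perf:=E\circ\Perf^\dagger|_{\Corr^\fqsm(\Sch_S)^\op}$: it preserves finite products (so is a presheaf with transfers valued in spectra), its restriction to $\Sch_S^\op$ via $\gamma_*$ is a Zariski sheaf, and it is pbf-local. Each such condition is a limit or equivalence condition on spectra, which is preserved under $\Omega^{\infty-n}$ for every $n\ge 0$; hence checking them on $E_\Perf$ itself suffices by the remark following Definition \ref{def:sh}.

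For finite products, use that $\Corr^\fqsm(\Sch_S)$ is semiadditive with finite products computed by disjoint unions of schemes, that $\Perf(X\sqcup Y)\simeq\Perf(X)\times\Perf(Y)$ is a direct sum in $\Cat^\ex_\infty$, and that any localizing invariant carries direct sums of stable $\infty$-categories to direct sums of spectra, via the split exact sequence $\Perf(X)\to\Perf(X)\times\Perf(Y)\to\Perf(Y)$. For Zariski descent, combine the localization and excision theorems for perfect complexes on derived schemes from \cite{SAG} with the defining property of a localizing invariant: a Zariski distinguished square yields an exact sequence in $\Cat^\ex_\infty$, which $E$ sends to a fiber sequence of spectra.

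For the projective bundle formula, the main input is Beilinson's semi-orthogonal decomposition for the relative projective space, $\Perf(\bar{\Delta}^n_X)=\langle p^*\Perf(X)(i)\colon 0\le i\le n\rangle$. Applying $E$ already yields $E(\bar{\Delta}^n_X)\simeq E(X)^{n+1}$. To recast this in the form demanded by pbf-locality, I would use the fiber sequence $\mcal{O}(-1)\to\mcal{O}\to\iota_*\mcal{O}_{\bar{\Delta}^{n-1}}$ on $\bar{\Delta}^n$ to inductively rewrite the twisted Beilinson summands in terms of the pushforward along the hyperplane $\iota$; this produces a split exact sequence of stable $\infty$-categories $\Perf(\bar{\Delta}^{n-1}_X)\xrightarrow{\iota_*}\Perf(\bar{\Delta}^n_X)\to\Perf(X)$ with splitting $p^*$, and applying the localizing invariant $E$ yields the required equivalence $E(X)\oplus E(\bar{\Delta}^{n-1}_X)\xrightarrow{\sim}E(\bar{\Delta}^n_X)$.

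The main obstacle is verifying that the abstract transfer $\iota_*$ on $E_\Perf$, produced on the correspondence category via Macpherson's extension theorem, genuinely agrees with the honest pushforward of perfect complexes along the quasi-smooth closed immersion $\iota$; this compatibility is what allows Beilinson's decomposition to be reinterpreted in the form demanded by pbf-locality. Once this identification is in hand, everything else reduces to standard manipulation of semi-orthogonal decompositions together with the defining property of a localizing invariant.
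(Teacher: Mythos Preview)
Your proposal is correct and follows essentially the same route as the paper, which dispatches the lemma in two lines by citing \cite{TT} and \cite[Lemma A.1]{LT} for Zariski descent and \cite[Theorem B]{Kha} for the projective bundle formula. Your argument simply unpacks what those references contain: the Thomason--Trobaugh localization sequence for $\Perf$ plus the definition of a localizing invariant gives descent, and Khan's result is precisely the semi-orthogonal decomposition $\Perf(\mbb{P}^n_X)=\langle p^*\Perf(X),\iota_*\Perf(\mbb{P}^{n-1}_X)\rangle$ you describe via mutation of the Beilinson collection. The ``main obstacle'' you flag---that the transfer on $E_\Perf$ coming from Macpherson's extension agrees with the honest pushforward $\iota_*$ on perfect complexes---is real but mild: by construction \cite[Theorem 3.8.1]{Mac} builds $\Perf^\dagger$ so that the map associated to ${}^!f$ is the right adjoint $f_*$ of $f^*$, which for a finite quasi-smooth $f$ is exactly pushforward of perfect complexes.
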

\begin{proof}
It suffices to show that the composite $E\circ\Perf^\dagger$ satisfies Zariski descent and projective bundle formula.
The descent essentially follows from the work \cite{TT}, see also \cite[Lemma A.1]{LT}, and the projective bundle formula is satisfied by \cite[Theorem B]{Kha}.
\end{proof}

We regard a localizing invariant as a pbf-local sheaf with transfers by Lemma \ref{lem:inv}.
The next goal is to show that a multiplicative localizing invariant yields an $\mbb{E}_\infty$-algebra in $\SH^\tr_\pbf(\Sch_S)$.
A \textit{multiplicative localizing invariant} is a lax symmetric monoidal functor $(\Cat^\ex_\infty)^\otimes \to \Sp^\otimes$ whose underlying functor $\Cat^\ex_\infty\to\Sp$ is a localizing invariant.

\begin{lemma}\label{lem:mult}
Let $\mcal{C}^\otimes$ be a cocartesian symmetric monoidal $\infty$-category whose underlying $\infty$-category $\mcal{C}$ is equipped with a marking with collar change \cite[3.1.3]{Mac}\footnote{I.e., $\mcal{C}$ is equipped with a distinguished class of morphisms which are stable under pushouts along arbitrary morphisms.}.
Let $F\colon\mcal{C}^\otimes\to\Cat_\infty$ be a lax cartesian structure \cite[Definition 2.4.1.1]{HA}.
Assume that:
\begin{enumerate}[label=\upshape{(\roman*)},leftmargin=*]
\item The underlying functor $F\colon\mcal{C}\to\Cat_\infty$ is right bivariant with collar change \cite[3.2.5]{Mac}.
\item $F$ satisfies projection formula, i.e., for any marked morphism $f\colon X\to Y$ in $\mcal{C}$, the canonical map
\[
	f_*(-)\otimes(-) \to f_*((-)\otimes f^*(-))
\]
is an equivalence, where $f^*:=F(f)$ and $f_*$ is a right adjoint of $f^*$.
\end{enumerate}
Then $F$ extends to a lax cartesian structure $F^\dagger\colon\co\Corr(\mcal{C})^\otimes\to\Cat_\infty$, where $\co\Corr(\mcal{C})$ is the $\infty$-category of cocorrespondences in $\mcal{C}$, i.e., $\co\Corr(\mcal{C}):=\Corr(\mcal{C}^\op)^\op$.
\end{lemma}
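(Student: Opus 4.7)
The plan is to extend $F$ in two stages: first extend the underlying one-variable functor using Macpherson's universal property of $\co\Corr$, then promote the result to a lax cartesian structure using the projection formula.

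For the first stage, hypothesis (i) says the underlying $F\colon\mcal{C}\to\Cat_\infty$ is right bivariant with collar change, so \cite[Theorem 3.8.1]{Mac} (applied to $\mcal{C}^\op$ to exchange $\Corr$ for $\co\Corr$) produces a canonical extension $F^\dagger\colon\co\Corr(\mcal{C})\to\Cat_\infty$. By construction, $F^\dagger$ agrees with $F$ on forward legs and acts by the right adjoints $f_*$ on marked backward legs.

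For the second stage, unwinding the definition of a lax cartesian structure, we must produce, coherently in $(X_1,\dotsc,X_n)$, structure maps $\prod_i F^\dagger(X_i)\to F^\dagger(X_1\sqcup\dotsb\sqcup X_n)$. For objects and forward morphisms of $\mcal{C}^\otimes$ these are provided by $F^\otimes$, so the issue is compatibility with marked backward legs: given marked $f_i\colon X_i\to Y_i$, we need the lax structure maps to intertwine $\prod_i f_{i*}$ with $(\sqcup_i f_i)_*$. Reducing to the binary case $f_1$ marked and $f_2=\id$, and then iterating, this is exactly a cocartesian-monoidal instance of the projection formula in (ii): for the cocartesian tensor product, pullback along the inclusion $X_1\hookrightarrow X_1\sqcup X_2$ recovers the summand, so (ii) identifies the two composites.

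The hard part will be assembling these componentwise compatibilities into a genuine functor of $\infty$-operads $\co\Corr(\mcal{C})^\otimes\to\Cat_\infty^\times$. I would handle this by a multi-variable enhancement of Macpherson's theorem: mark $\mcal{C}^{\times n}$ componentwise and observe that $\co\Corr(\mcal{C}^{\times n})\simeq\co\Corr(\mcal{C})^{\times n}$, so the operadic extension can be obtained level by level, with the projection formula providing the required transition between levels. Alternatively, one can present $F^\dagger$ as a right Kan extension of $F^\otimes$ along the symmetric monoidal embedding $\mcal{C}^\otimes\hookrightarrow\co\Corr(\mcal{C})^\otimes$, and check pointwise via (ii) that the extension preserves the Segal/inert decompositions characterizing a lax cartesian structure. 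Either way, the full $\infty$-categorical coherence reduces to the single equivalence stated in (ii) together with the bivariance machinery already used in stage one.
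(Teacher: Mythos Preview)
Your ingredients are the right ones, and the reduction of the compatibility check to the projection formula is exactly what happens in the paper. The difference is in how the coherence problem you call ``the hard part'' is handled. Rather than extending $F$ on $\mcal{C}$ first and then trying to promote to a lax cartesian structure, the paper applies Macpherson's theorem \emph{once} to the whole marked category $\mcal{C}^\otimes$: one equips $\mcal{C}^\otimes$ with the \emph{vertical} marking (a morphism is marked iff it lies over an identity in $\Fin_*$ and is a product of marked morphisms in $\mcal{C}$), and then checks that $F\colon\mcal{C}^\otimes\to\Cat_\infty$ is itself right bivariant with collar change. The base-change condition for inert morphisms is trivial; for active ones, an induction reduces to the codiagonal square
\[
\xymatrix{
	(Y,Y) \ar[d]^\delta \ar[r]^{(f_1,f_2)} & (X_1,X_2) \ar[d] \\
	Y \ar[r] & X_1\sqcup_Y X_2,
}
\]
where the base-change map $\delta^*(f_1,f_2)_*\to f_*\delta'^*$ unwinds to $f_{1*}\otimes f_{2*}\to f_*(f_1'^*\otimes f_2'^*)$, i.e.\ the projection formula. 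Macpherson's theorem then outputs a functor on $\co\Corr(\mcal{C}^\otimes)\simeq\co\Corr(\mcal{C})^\otimes$ directly, and it is lax cartesian because $\mcal{C}^\otimes\to\co\Corr(\mcal{C})^\otimes$ is essentially surjective. This sidesteps any level-by-level gluing.

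Of your two suggested fixes, the first (``multi-variable enhancement'') is essentially this, once you realize that marking $\mcal{C}^{\times n}$ componentwise for all $n$ coherently \emph{is} the vertical marking on $\mcal{C}^\otimes$; but you should apply Macpherson to $\mcal{C}^\otimes$ in one go rather than ``level by level'', since the latter would still leave you with an assembly problem over $\Fin_*$. The second suggestion (right Kan extension) is unlikely to work as stated: $F^\dagger$ is not in general a right Kan extension of $F$ along $\mcal{C}^\otimes\hookrightarrow\co\Corr(\mcal{C})^\otimes$, because the value on a backward marked leg is the right adjoint $f_*$, which is not computed as a limit of forward values of $F$.
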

We remark that the functor $F\colon\mcal{C}\to\Cat_\infty$ factors through the $\infty$-category $\Mon_{\mbb{E}_\infty}(\Cat_\infty)$ of symmetric monoidal $\infty$-categories, but $F^\dagger\colon\co\Corr(\mcal{C})\to\Cat_\infty$ does not factor through $\Mon_{\mbb{E}_\infty}(\Cat_\infty)$ in general.
\begin{proof}
We equip $\mcal{C}^\otimes$ with the vertical marking, i.e., a morphism $f$ in $\mcal{C}^\otimes$ is marked if and only if it lies over the identity $\langle n\rangle\to\langle n\rangle$ for some $n\ge 0$ and $f$ is a product of marked morphisms in $\mcal{C}$.
It suffices to show that the functor $F\colon\mcal{C}^\otimes\to\Cat_\infty$ is right bivariant with collar change; then we can apply Macpherson's extension theorem \cite[Theorem 3.8.1]{Mac} and the extension is lax cartesian since $\mcal{C}^\otimes\to\co\Corr(\mcal{C})^\otimes$ is essentially surjective.
We have to show that, for a cocartesian square
\[
\xymatrix{
	(Y_1,\dotsc,Y_m) \ar[d]^g \ar[r]^f & (X_1,\dotsc,X_m) \ar[d]^{g'} \\
	(Y_1',\dotsc,Y_n') \ar[r]^{f'} & (X_1',\dotsc,X_n')
}
\]
in $\mcal{C}^\otimes$ with $f$ being a product of marked morphisms, the base change map $g^*f_*\to f'_*g^{'*}$ is an equivalence.
This is clear if $g$ is an inert morphism and thus we may assume that $g$ is active.
Then, by induction, the problem is reduced to the right adjointability of cocartesian squares of the form
\[
\xymatrix{
	(Y,Y) \ar[d]^\delta \ar[r]^{(f_1,f_2)} & (X_1,X_2) \ar[d]^{\delta'} \\
	Y \ar[r]^f & X,
}
\]
where $\delta$ is the codiagonal map.
The square is cocartesian if and only if $X\simeq X_1\sqcup_YX_2$, and in this case the base change map $\delta^*(f_1,f_2)_*\to f_*\delta^{'*}$ is identified with $f_{1*}\otimes f_{2*}\to f_*(f_1^{'*}\otimes f_2^{'*})$, where $f_i'$ is the canonical map $X_i\to X$.
Then it is an equivalence by the projection formula.
\end{proof}

\begin{lemma}\label{lem:ex}
Let $\mcal{C}^\otimes$ be a symmetric monoidal $\infty$-category and $F\colon\mcal{C}^\otimes\to\Cat_\infty$ a lax cartesian structure.
Assume that:
\begin{enumerate}[label=\upshape{(\roman*)},leftmargin=*]
\item The underlying functor $F\colon\mcal{C}\to\Cat_\infty$ factors through the subcategory $\Cat_\infty^\ex$.
\item For each morphism $(X_1,\dotsc,X_n)\to X$ in $\mcal{C}^\otimes$ lying over the active morphism $\langle n\rangle\to\langle 1\rangle$, the induced functor
\[
	F(X_1)\times\dotsb\times F(X_n) \to F(X)
\]
preserves finite colimits in each variable.
\end{enumerate}
Then $F$ uniquely lifts to a lax symmetric monoidal functor $\mcal{C}^\otimes\to(\Cat_\infty^\ex)^\otimes$.
\end{lemma}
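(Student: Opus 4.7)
The plan is to deduce the lemma from the universal property of the symmetric monoidal structure $(\Cat^\ex_\infty)^\otimes$ constructed in \cite[\S4.8.1]{HA} (or its small-category analogue). Recall that the exact tensor product $\otimes_\ex$ is characterized by the property that, for small stable $\infty$-categories $\mcal{D}_1,\dotsc,\mcal{D}_n,\mcal{E}$, exact functors $\mcal{D}_1\otimes_\ex\dotsb\otimes_\ex\mcal{D}_n\to\mcal{E}$ correspond naturally to functors $\mcal{D}_1\times\dotsb\times\mcal{D}_n\to\mcal{E}$ preserving finite colimits in each variable separately. Equivalently, the underlying $\infty$-operad of $(\Cat^\ex_\infty)^\otimes$ is a full sub-$\infty$-operad of the underlying $\infty$-operad of the cartesian symmetric monoidal $\infty$-category $(\Cat_\infty)^\times$: its colors are the small stable $\infty$-categories, and its multimorphism spaces are the multi-exact functors.

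First I would repackage the input. By \cite[Proposition 2.4.1.7]{HA}, since $\Cat_\infty$ admits finite products, a lax cartesian structure $F\colon\mcal{C}^\otimes\to\Cat_\infty$ is the same datum as a lax symmetric monoidal functor $F\colon\mcal{C}^\otimes\to(\Cat_\infty)^\times$, i.e., as a map of $\infty$-operads. Unpacking, this consists of the underlying functor $\mcal{C}\to\Cat_\infty$ together with, for each active morphism $(X_1,\dotsc,X_n)\to X$ in $\mcal{C}^\otimes$, a functor $F(X_1)\times\dotsb\times F(X_n)\to F(X)$, all compatibly with compositions.

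Next I would lift along the full inclusion of $\infty$-operads described above. Assumption (i) asserts that the underlying functor of $F$ lands in the full subcategory $\Cat^\ex_\infty$; assumption (ii) asserts that each multimorphism $F(X_1)\times\dotsb\times F(X_n)\to F(X)$ in the image of $F$ is multi-exact. Together these are exactly the conditions that $F$ factor through the full sub-$\infty$-operad $(\Cat^\ex_\infty)^\otimes\subset(\Cat_\infty)^\times$, whence a unique lift $F^\ex\colon\mcal{C}^\otimes\to(\Cat^\ex_\infty)^\otimes$ exists; pointwise, it is determined by passing each multifunctor through the universal multi-exact arrow $F(X_1)\times\dotsb\times F(X_n)\to F(X_1)\otimes_\ex\dotsb\otimes_\ex F(X_n)$.

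The main obstacle is to formalize the claim that the underlying $\infty$-operad of $(\Cat^\ex_\infty)^\otimes$ is the full sub-$\infty$-operad of $(\Cat_\infty)^\times$ spanned by stable $\infty$-categories and multi-exact operations. This is essentially the content of the universal property of $\otimes_\ex$, but assembling it at the operadic level requires some care. One route is to use operadic left Kan extension as in HA \S2.2.1 to construct $(\Cat^\ex_\infty)^\otimes$ from the multi-exact operations on the cartesian product, verifying the expected universal property on objects and on multimorphisms. Alternatively, one can exhibit $(\Cat^\ex_\infty)^\otimes$ as modules over an idempotent $\mbb{E}_\infty$-algebra in a suitable ambient presentable symmetric monoidal $\infty$-category. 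Either approach reduces the higher coherences to the pointwise and arrowwise checks that (i) and (ii) supply, yielding both the existence and the uniqueness of the lift.
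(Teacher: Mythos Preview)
Your overall plan---convert the lax cartesian structure to a lax symmetric monoidal functor $\mcal{C}^\otimes\to\Cat_\infty^\times$ via \cite[Proposition 2.4.1.7]{HA}, then factor through a suboperad---is exactly the paper's strategy. The difference lies in how the factorization is carried out, and the paper's route neatly dissolves the ``main obstacle'' you identify.

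Rather than arguing directly that $(\Cat^\ex_\infty)^\otimes$ sits inside $\Cat_\infty^\times$, the paper inserts the intermediate $\Cat_\infty(\mscr{K})^\otimes$ of \cite[Notation 4.8.1.2]{HA} with $\mscr{K}$ the finite simplicial sets. That object is \emph{defined} as a subcategory of $\Cat_\infty^\times$, so the factorization criterion is immediate: $F$ lands there iff each $F(X)$ has finite colimits and condition (ii) holds. One then reaches $(\Cat^\ex_\infty)^\otimes$ by composing with the symmetric monoidal reflector $\Cat_\infty(\mscr{K})\to\Cat^\ex_\infty$; since condition (i) says the values of $F$ are already stable, the reflector does nothing on objects, and uniqueness comes from the localization being fully faithful. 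This two-step route is short and cites only off-the-shelf HA constructions; your direct route is morally equivalent but, as you note, requires independently establishing the suboperad structure on $(\Cat^\ex_\infty)^\otimes$, which the paper avoids.

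One terminological point: you call $(\Cat^\ex_\infty)^\otimes$ a \emph{full} sub-$\infty$-operad of $\Cat_\infty^\times$, but then (correctly) say its multimorphism spaces are the multi-exact functors. These are not all functors out of the product, so the inclusion is not full at the operadic level; it is a faithful (indeed monomorphic) map of $\infty$-operads. This does not break your argument---factoring through a monomorphism is still a condition on where objects and multimorphisms land---but the word ``full'' should be dropped.
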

\begin{proof}
By \cite[Proposition 2.4.1.7]{HA}, the $\infty$-category $\Fun^\lax(\mcal{C}^\otimes,\Cat_\infty)$ of lax cartesian structures is equivalent to the $\infty$-category $\Fun^\lax(\mcal{C}^\otimes,\Cat_\infty^\times)$ of lax symmetric monoidal functors.
Under this equivalence, a lax symmetric monoidal functor $F\colon\mcal{C}^\otimes\to\Cat_\infty^\times$ factors through the subcategory $\Cat_\infty(\mscr{K})^\otimes$ constructed in \cite[Notation 4.8.1.2]{HA}, where we take $\mscr{K}$ to be the set of all finite simplicial sets, if and only if the $\infty$-category $F(X)$ admits finite colimits for each $X\in\mcal{C}$ and $F$ satisfies the condition (ii).
Furthermore, since $\Cat^\ex_\infty$ is a reflective subcategory of $\Cat_\infty(\mscr{K})$ and the left adjoint is symmetric monoidal, we conclude that $F$ uniquely factors thought a lax symmetric monoidal functor $\mcal{C}^\otimes\to(\Cat_\infty^\ex)^\otimes$.
\end{proof}

\begin{corollary}\label{cor:mult}
Let $E$ be a multiplicative localizing invariant and $S$ a qcqs derived scheme.
Then the associated pbf-local sheaf $E$ of spectra with transfers is promoted to an $\mbb{E}_\infty$-algebra in $\SH^\tr_\pbf(\Sch_S)$.
\end{corollary}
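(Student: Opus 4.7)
The plan is to chain Lemmas \ref{lem:mult}, \ref{lem:ex}, and \ref{lem:inv}: upgrade $\Perf^\dagger$ to a lax symmetric monoidal functor valued in $(\Cat_\infty^\ex)^\otimes$, postcompose with $E$, and transport the resulting $\mbb{E}_\infty$-algebra through $L_\mot$ and $B^\infty_\mot$ into $\SH^\tr_\pbf(\Sch_S)$.

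First, I would apply Lemma \ref{lem:mult} to promote $\Perf^\dagger$ to a lax cartesian structure on the monoidal category $\Corr^\fqsm(\Sch)^{\op,\otimes}$. Set $\mcal{C}=\Sch^\op$ with marking coming from finite quasi-smooth morphisms in $\Sch$; this is stable under pushouts in $\mcal{C}$ by stability of finite quasi-smoothness under pullbacks, so collar change holds. Equip $\mcal{C}$ with its cocartesian symmetric monoidal structure (whose tensor product is given by products in $\Sch$), and let $F\colon\mcal{C}^\otimes\to\Cat_\infty$ be the lax cartesian extension of $\Perf$ by finite products. The hypotheses of Lemma \ref{lem:mult} hold: bivariance of $\Perf$ is the input already used in the construction of $\Perf^\dagger$ preceding Lemma \ref{lem:inv}, and the projection formula for perfect complexes along finite quasi-smooth morphisms is standard. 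Lemma \ref{lem:mult} produces a lax cartesian structure on $\co\Corr(\Sch^\op)^\otimes\simeq\Corr^\fqsm(\Sch)^{\op,\otimes}$ which recovers $\Perf^\dagger$. Applying Lemma \ref{lem:ex}, whose hypotheses follow from $\Perf$ landing in $\Cat_\infty^\ex$ and bi-exactness of the tensor product of perfect complexes, then promotes $\Perf^\dagger$ uniquely to a lax symmetric monoidal functor $\Corr^\fqsm(\Sch)^{\op,\otimes}\to(\Cat_\infty^\ex)^\otimes$.

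Next, composing with the lax symmetric monoidal $E\colon(\Cat_\infty^\ex)^\otimes\to\Sp^\otimes$ and restricting to $\Sch_S$ yields a lax symmetric monoidal functor $\Corr^\fqsm(\Sch_S)^{\op,\otimes}\to\Sp^\otimes$, equivalently an $\mbb{E}_\infty$-algebra in presheaves of spectra on $\Corr^\fqsm(\Sch_S)$, whose underlying object is the pbf-local sheaf of Lemma \ref{lem:inv}. Since the symmetric monoidal structure on $\PSh^\tr_\Sigma(\Sch_S)$ is characterized by making the Yoneda embedding symmetric monoidal, and both $L_\mot$ and $B^\infty_\mot$ are symmetric monoidal, this $\mbb{E}_\infty$-algebra descends to one in $\SH^\tr_\pbf(\Sch_S)$, which is the required structure.

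The main obstacle is bookkeeping the symmetric monoidal structures across several opposites: one must verify that the monoidal structure on $\Corr^\fqsm(\Sch_S)$ obtained through Lemma \ref{lem:mult} agrees with the canonical one from Section \ref{pbf} (for which the Yoneda embedding is symmetric monoidal). Everything else is either routine or directly supplied by the preceding lemmas.
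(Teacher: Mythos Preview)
Your proposal is essentially the paper's own argument: apply Lemma~\ref{lem:mult} with $\mcal{C}=\Sch^\op$ and $F=\Perf$ to get the lax cartesian structure $\Perf^\dagger$ on $(\Corr^\fqsm(\Sch)^\op)^\otimes$, upgrade via Lemma~\ref{lem:ex} to a lax symmetric monoidal functor into $(\Cat_\infty^\ex)^\otimes$, postcompose with $E$, and push the resulting $\mbb{E}_\infty$-algebra through the symmetric monoidal localization. The only cosmetic difference is that the paper phrases the last step via Day convolution on $\Fun(\Corr^\fqsm(\Sch_S)^\op,\Sp)$ and a single application of $L_\mot$, whereas you route through $\PSh^\tr_\Sigma$ and mention $B^\infty_\mot$; since the input is already a presheaf of spectra, the $B^\infty_\mot$ step is superfluous, but this does not affect correctness.
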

\begin{proof}
We apply Lemma \ref{lem:mult} to $\mcal{C}=\Sch^\op$ and $F=\Perf$.
Then $\Perf$ satisfies projection formula by \cite[Remark 3.4.2.6]{SAG}, and thus we get a lax cartesian structure
\[
	\Perf^\dagger\colon(\Corr^\fqsm(\Sch)^\op)^\otimes\to\Cat_\infty.
\]
This extension satisfies the condition of Lemma \ref{lem:ex}, and thus $\Perf^\dagger$ lifts to a lax symmetric monoidal functor to $(\Cat_\infty^\ex)^\otimes$.
Then we get a lax symmetric monoidal functor
\[
	E^\otimes\colon\: (\Corr^\fqsm(\Sch_S)^\op)^\otimes
	\to (\Corr^\fqsm(\Sch)^\op)^\otimes
	\xrightarrow{\Perf^\dagger} (\Cat_\infty^\ex)^\otimes
	\xrightarrow{E} \Sp^\otimes.
\]
We can regard $E^\otimes$ as an $\mbb{E}_\infty$-algebra in the symmetric monoidal $\infty$-category $\Fun(\Corr^\fqsm(\Sch_S)^\op,\Sp)^\otimes$ of presheaves of spectra and the Day convolution products.
Since the localization functor $L_\mot$ is symmetric monoidal and $E^\otimes$ underlies the pbf-local sheaf $E$ of spectra with transfers, we conclude that $E=L_\mot E^\otimes$ is an $\mbb{E}_\infty$-algebra in $\SH^\tr_\pbf(\Sch_S)$.
\end{proof}

\end{document}